\documentclass{amsart}

\usepackage{amsmath,amssymb,amsthm,enumerate}
\usepackage{eucal,mathrsfs}
\usepackage[bbgreekl]{mathbbol}
\usepackage{color}
\usepackage[all]{xy}
\usepackage{pxfonts}

\theoremstyle{plain}
\newtheorem{lem}{Lemma}[section]

\newtheorem{thm}[lem]{Theorem}
\newtheorem*{thm*}{Theorem}

\newtheorem{prop}[lem]{Proposition}

\newtheorem{conj}[lem]{Conjecture}
\theoremstyle{definition}
\newtheorem{defn}[lem]{Definition}
\newtheorem{rem}[lem]{Remark}

\newtheorem{notn}[lem]{Notation}


\newcommand{\mbb}[1]{\mathbb #1}

\newcommand{\mc}[1]{\mathcal #1}
\newcommand{\ms}[1]{\mathscr #1}

\newcommand{\oper}[1]{\operatorname{#1}}


\newcommand\et{{\acute{e}t}}

\usepackage[OT2,T1]{fontenc}
\DeclareSymbolFont{cyrletters}{OT2}{wncyr}{m}{n}
\DeclareMathSymbol{\Sha}{\mathalpha}{cyrletters}{"58}


\newcommand{\Br}{\oper{Br}}

\newcommand{\per}{\oper{per}}
\newcommand{\ind}{\oper{ind}}

\newcommand{\Gal}{\oper{Gal}}

\newcommand{\frc}{\oper{frac}}

\newcommand{\Spec}{\oper{Spec}}

\newcommand{\subnorm}{\triangleleft}


\newcommand{\ov}{\overline}
\newcommand{\til}{\widetilde}
\newcommand{\wh}{\widehat}


\newcommand{\fun}[1]{\underline{\mc Fun / #1}}
\newcommand{\abfun}[1]{\underline{\mc Ab / #1}}

\newcommand{\eind}{\oper{eind}}
\newcommand{\rind}{\oper{rind}}


\newcommand{\comm}[1]{\underline{\mc C\mc Alg / #1}}




\newcommand{\pres}[1]{\underline{\mc Pres}}

\newcommand{\f}[1]{\textbf{h}_{#1}}






\newcommand{\cd}{\oper{cd}}


\usepackage{tipa}
\usepackage{hieroglf}




\newcommand{\R}{\oper{R}}

\newcommand{\Pf}{\oper{Pf\,}}





\title[Period-index, symbol length, and generic splittings]{Period and
index, symbol lengths, and generic splittings in Galois cohomology}
\author{Daniel Krashen}

\thanks{This research was partially supported by NSF grants
DMS-1007462 and DMS-1151252}

\begin{document}

\begin{abstract}
We use constructions of versal cohomology classes based on a new
notion of ``presentable functors,'' to describe a relationship between
the problems of bounding symbol length in cohomology and of finding the
minimal degree of a splitting field. The constructions involved are
then also used to describe generic splitting varieties for degree $2$
cohomology with coefficients in a commutative algebraic group of
multiplicative type.
\end{abstract}

\maketitle


\section{Introduction}
For a field $F$ containing an $\ell$'th root of unity, and a central
simple $F$-algebra $A$ of period $\ell$, it follows from the seminal
result of Merkurjev and Suslin \cite{MS:NRH}, that $A$ is similar to
a product of symbol algebras of degree $\ell$. The minimal number of
symbols needed in such a presentation is called the symbol length of
$A$. The question then becomes, for a given field $F$, what is the
minimal number of symbols which are necessary to express any central
simple $F$-algebra of period $\ell$, or in other words, what is the
maximum symbol length of such an algebra? One may think of this
measure as an attempt to bound the potential ``complexity'' of the
structure of central simple $F$-algebras.

This question may naturally be generalized to higher degree cohomology
classes as well. Thanks to the Bloch-Kato conjecture/norm residue
isomorphism Theorem, proved in \cite{Voe:modl} (see also
\cite{Wei:NRI}), no longer assuming that $\mu_\ell \subset F$, if we
are given a cohomology class $\alpha \in H^n(F, \mu_\ell^{\otimes
n})$, we know that $\alpha$ may be written as a sum of some number of
symbols, where a symbol in this case refers to an $n$-fold cup product
of classes in $H^1(F, \mu_\ell)$. We may then ask, how many symbols
are necessary to express our given class $\alpha$ -- this is the
symbol length of $\alpha$. As before, we ask: ``what the maximum value
of the length over all possible classes in $H^n(F, \mu_\ell^{\otimes
n})$?''

One may also ask a similar question for quadratic forms. Namely, we
know that the powers of the fundamental ideal $I^n(F)$ are generated
by $n$-fold Pfister forms as Abelian groups. We can then ask how many
$n$-fold Pfister forms are necessary to write a given element of
$I^n(F)$? 

Conjecturally, these questions should be closely connected with other
measurements of complexity of the arithmetic of the field $F$, such as
the cohomological dimension, the Diophantine dimension, the
$u$-invariant of $F$, as well as its Brauer dimension (defined as the
minimum $d$ such that $\ind \alpha | (\per \alpha)^d$ for all Brauer
classes $\alpha \in \Br(F)$). The question of determining the Brauer
dimension is a special case of the period-index problem in cohomology. 

\smallskip

In this paper, we draw connections between these various measures of
complexity and describe new relationships between them. In part, our
method is to construct ``generic cohomology classes,'' of various
types using objects which we call ``presentable functors,'' a concept
somewhat related to O'Neil's notion of ``sampling spaces''
\cite{ONeil:SS,ONeil:MG1C}. In addition, the new technique of
presentable functors allows for the writing down of new generic
splitting varieties for cohomology classes, which we do at the end of
the paper.

\section*{Acknowledgments}
The material in this paper came as a direct result of discussions and
questions which arose at the workshop ``Deformation theory, patching,
quadratic forms, and the Brauer group,'' organized by the author and Max
Lieblich, held at the American Institute of Mathematics in 2011, and the
author benefited from a number of conversations from the participants of
that conference. The author would like to thank Parimala and A.  Merkurjev
for useful conversations during the writing of this paper, and Suresh and
Parimala for pointing out inconsistencies in a preliminary version of the
manuscript. In addition, the author would like to thank the anonymous
referee for a large number of helpful comments and corrections.

The work presented is closely related to the work of Saltman
\cite{Sal:CSL}, which also had its origins in the same workshop,
although these works were done independently and using different
methods.  The paper \cite{Sal:CSL}, written prior to this one,
considers the problem of symbol length in mod-$2$ cohomology based on
a finite $u$-invariant assumption and in this context give not only
the boundedness results obtained in this paper, but also more detailed
information concerning solvable lengths of Galois groups which split
cohomology classes. The present paper provides results concerning
cohomology with more general coefficient groups, and also addresses
related topics such as Pfister length, the period-index problem, and
generic splitting varieties.

\section{Overview and general remarks}

In this section we briefly give an overview of the results of the
paper.

\subsection*{Symbol length}

Suppose for a field $F$, we may bound the degrees of splitting fields
for cohomology classes up to a certain degree $n$. In this case, we we
may bound the symbol length of cohomology classes in degree $n$. More
precisely:

\begin{defn} \label{effective index definition}
For $\alpha \in H^i(F, \mu_\ell^{\otimes i})$, we define the
\textit{effective index} of $\alpha$, denoted $\eind(\alpha)$ to be
the minimum degree of a finite separable field extension $E/F$ such
that $\alpha_E = 0$.
\end{defn}

\begin{defn} \label{length definition}
For $\alpha \in H^i(F, \mu_\ell^{\otimes i})$, we define the length of
$\alpha$, denoted $\lambda(\alpha)$ to be the smallest $m$ such that
$\alpha$ may be written as a sum of $m$ symbols.
\end{defn}

\begin{thm*}[\ref{main bound}]
Let $F$ be a field and fix a positive integer $\ell$ prime to the
characteristic of $F$. Fix $d > 0$. Suppose that for every $m > 0$, there
is some $N_m$ such that for every cohomology class $\beta \in H^{d'}(L,
\mu_\ell^{\otimes d'})$ with $[L:F] \leq m$ and $d' < d$ we have $\eind
\beta \leq N_m$.  Then for $\alpha \in H^d(F, \mu_\ell^{\otimes d})$, we
may bound $\lambda(\alpha)$ in terms of the effective index of $\alpha$.
\end{thm*}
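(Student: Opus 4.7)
My plan is to proceed by induction on the cohomological degree $d$. The base case $d=1$ is immediate: every class in $H^1(F,\mu_\ell)$ is itself a symbol, so $\lambda(\alpha)\leq 1$ unconditionally.

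For the inductive step, assume the statement in all degrees less than $d$. Fix $\alpha \in H^d(F,\mu_\ell^{\otimes d})$ with $\eind(\alpha)\leq M$, and let $E/F$ be a finite separable splitting field of degree $\leq M$. The goal is to express $\alpha$ as a sum $\alpha = \sum_{i=1}^{r}\chi_i\cup\gamma_i$ with $\chi_i\in H^1(F,\mu_\ell)$ and $\gamma_i\in H^{d-1}(F,\mu_\ell^{\otimes(d-1)})$, where both $r$ and the effective indices $\eind(\gamma_i)$ are controlled in terms of $N$, $d$, and $M$. Granting such a decomposition, the hypothesis of the theorem gives $\eind(\gamma_i)\leq N$, and the inductive hypothesis in degree $d-1$ then bounds each $\lambda(\gamma_i)$ by an explicit function of $N$ and $d$. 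Since cupping with $\chi_i$ converts each symbol of $\gamma_i$ into a symbol of $\chi_i\cup\gamma_i$, we conclude $\lambda(\alpha)\leq r\cdot\max_i\lambda(\gamma_i)$, the desired bound.

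To produce the decomposition, I would first reduce to the Kummer setting. By passing to $F(\mu_\ell)$ (an extension of prime-to-$\ell$ degree), and by exploiting the fact that any class of order $\ell$ dies in the maximal prime-to-$\ell$ subextension of $E/F$, one may assume $\mu_\ell\subset F$ and $[E:F]=\ell^k$ with $k\leq\log_\ell M$; both reductions affect $\lambda$ only by a controlled multiplicative factor, via standard corestriction-of-symbols estimates. Then $E/F$ decomposes as a tower $F=F_0\subset F_1\subset\cdots\subset F_k=E$ of $\ell$-cyclic Kummer extensions $F_{i+1}=F_i(\sqrt[\ell]{a_i})$, and the Bass--Tate exact sequence
\[
H^{d-1}(F_i,\mu_\ell^{\otimes(d-1)})\xrightarrow{(a_i)\cup-}H^d(F_i,\mu_\ell^{\otimes d})\xrightarrow{\res}H^d(F_{i+1},\mu_\ell^{\otimes d})
\]
allows one to peel off a degree-$1$ factor $(a_i)$ at each stage of the tower.

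The main obstacle is the descent of this decomposition down the tower: at each level, one must simultaneously control the number of symbols produced and the effective index of the remaining degree-$(d-1)$ factor, and naively the inductive hypothesis would only be applicable over the intermediate $F_i$ rather than over $F$ itself, threatening an exponential blow-up in $k$. This is precisely where I would invoke the presentable functor formalism developed in the paper: by replacing the individual class $\alpha$ with a single versal class $\alpha_v\in H^d(F_v,\mu_\ell^{\otimes d})$ specializing to any such $\alpha$, the estimate on $\lambda(\alpha_v)$ made once over the versal field yields a uniform bound that descends along specialization to every $\alpha$ of effective index $\leq M$, in terms of $N$, $d$, and $\eind(\alpha)$ alone.
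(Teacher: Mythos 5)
Your reduction of the theorem to finding a decomposition $\alpha=\sum_{i=1}^{r}\chi_i\cup\gamma_i$ over $F$ with $r$ bounded is sound, and the way you then feed each $\gamma_i$ into the degree-$(d-1)$ hypothesis and the inductive bound is fine. The gap is that your method never actually produces such a decomposition over $F$. The Arason/Bass--Tate sequence, applied at the top of the tower, only writes $\alpha_{F_{k-1}}=(a_{k-1})\cup\gamma$ over the intermediate field $F_{k-1}$; there is no mechanism to push this expression down to $F$, since corestriction followed by restriction multiplies $\alpha$ by $\ell$ (hence kills it), and the corestriction of a symbol from a proper extension is not known to have bounded symbol length (your ``standard corestriction-of-symbols estimates'' for the prime-to-$\ell$ and Kummer reductions are likewise not standard, and for composite $\ell$ the exact sequence you invoke is not available at all). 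You correctly flag descent as the main obstacle, but the versality fix you propose does not close it, for three reasons. First, the lower-degree effective-index hypothesis is assumed only over \emph{finite} extensions of $F$, so your tower argument cannot be run over a versal field $F_v$ of positive transcendence degree, and no bound on $\lambda(\alpha_v)$ is obtained there. Second, even granting such a bound, symbol length does not descend along naive specialization: the entries of the symbols over $F_v$ are rational functions that may degenerate at the specialization point; circumventing this is exactly the content of the stratification/compactness argument of Theorem~\ref{functor compactness}. Third, a versal object for ``all $\alpha$ with $\eind(\alpha)\le M$'' is not what is available; the coverable functor of Proposition~\ref{G coverability} parametrizes classes \emph{inflated} from $H^d(G,\cdot)$ for a fixed finite group $G$, and showing that a class of bounded effective index is inflated from a Galois group of bounded size is itself a theorem.

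For comparison, the paper's proof avoids the tower and the degree induction entirely and is a composition of two independent results. Theorem~\ref{bounded rep thm} uses the Hochschild--Serre spectral sequence, together with the effective-index bounds in degrees $<d$, to produce a Galois extension $\widetilde K/F$ of degree bounded in terms of $N$, $d$ and $\eind(\alpha)$ such that $\alpha$ is inflated from $H^d(\Gal(\widetilde K/F),\mu_\ell)$: one pushes $\alpha$ one step deeper into the filtration at a time by splitting the finitely many lower-degree classes occurring as values of a representing cochain. This is the only place the hypothesis on lower-degree indices enters. Theorem~\ref{rep to length bound} then shows, via versal $G$-Galois extensions, Bloch--Kato, and pointwise quasi-compactness, that all classes inflated from a fixed finite group $G$ (over any field) have uniformly bounded symbol length. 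Your proposal in effect tries to merge these two steps, and in doing so loses both the place where the index hypothesis is used and the mechanism that makes the versal bound specialize.
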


We note that in particular, the bound doesn't depend on the particular
field $F$ except in the determination of the constant $N$. In
section~\ref{coh2}, we present some evidence that $N$ might
be bounded in terms of the Diophantine dimension of the field $F$
and its finite extensions, and in fact prove that this is the case for
$\ell = 2$.

\begin{rem} \label{length to index}
We note that it is immediate that if $\alpha \in H^d(F, \ell^{\otimes
d})$ then we may bound the effective index of $\alpha$ in terms of
its symbol length.  To see this, suppose that we may write $\alpha =
\sum_{j = 1}^m \ov a^j$ with $\ov a^j = (a_1^j, \ldots, a_i^j)$.
One easily finds that the field \(L = F\left(\sqrt[\ell]{a_1^1},
\sqrt[\ell]{a_1^2}, \ldots, \sqrt[\ell]{a_1^m}\right)\) splits
$\alpha$.
\end{rem}

\subsection*{Generic splittings}

\begin{defn}
Let $A$ be a commutative group scheme and $\alpha \in H^d(F, A)$. We
say that a scheme $X/F$ is a generic splitting variety for $\alpha$
if for every field extension $L/F$ we have $X(L) \neq \emptyset$ if
and only if $\alpha_L = 0$.
\end{defn}

We note that for elements $\alpha$ of $H^d(F,
\mu_\ell^{\otimes^{d-1}})$ which are symbols (i.e. cup products of a
class in $H^1(F, \mbb Z/\ell)$ and $d-1$ classes in $H^1(F,
\mu_\ell)$), Rost's norm varieties, as constructed in
\cite{SJ:NV,HW:NVCL} are known to be generic splitting varieties for
$\alpha$ under the assumption that $\ell$ is a prime and that $F$ has
no prime-to-$\ell$ extensions. Specific constructions for symbols give
the existence of generic splitting varieties for symbols in small
degree, such as the Merkurjev-Suslin variety for a symbol in $H^3(F,
\mu_\ell^{\otimes 2})$. For non-symbols in degree at least $2$, the
Severi-Brauer variety gives the only known example of a generic
splitting variety for a general class $\alpha \in H^2(F, \mu_\ell) =
\Br(F)_\ell$. 

We show that one may construct such generic splitting
varieties for a general class in $H^2(F, A)$ for any 
commutative group scheme $A$ of multiplicative type: 
\begin{thm*}[\ref{generic splittings degree 2}]
Let $A$ be any commutative group scheme over $F$ of multiplicative
type and suppose $\alpha \in H^2(F, A)$.  Then there exists a smooth
generic splitting variety $X/F$ for $\alpha$.
\end{thm*}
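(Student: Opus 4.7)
The plan is to reduce to the classical Severi-Brauer construction via a short exact sequence and then to capture the residual obstruction by a torsor construction.  By the anti-equivalence between multiplicative-type groups and finitely generated continuous Galois modules, any surjection from a permutation module onto $X^*(A)$ dualizes to an exact sequence
\[
1 \to A \to T \to T' \to 1
\]
in which $T = \prod_{i=1}^k R_{E_i/F}\Gm$ is a quasi-trivial torus (for some finite separable extensions $E_i/F$) and $T'$ is a torus.  By Shapiro's lemma the image $\beta$ of $\alpha$ in $H^2(F,T) = \bigoplus_i \Br(E_i)$ is represented by central simple algebras $D_i/E_i$, and I set $Z := \prod_i R_{E_i/F}\mathrm{SB}(D_i)$, which is smooth projective over $F$.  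By the defining property of Severi-Brauer varieties together with Weil restriction, $Z(L)\neq\emptyset$ if and only if $\beta_L = 0$ in $\bigoplus_i \Br(L \otimes_F E_i)$.

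The key step is to show that $\beta$ pulls back to zero on $Z$: its image in $H^2(Z,T_Z) = \bigoplus_i \Br(Z\times_F E_i)$ vanishes.  The projection $Z \to R_{E_i/F}\mathrm{SB}(D_i)$ composed with the Weil-restriction counit yields a canonical morphism $Z\times_F E_i \to \mathrm{SB}(D_i)$, so it is enough to observe that $[D_i]$ already dies in $\Br(\mathrm{SB}(D_i))$; this is the classical splitting coming from the fact that $D_i \otimes_{E_i}\mathcal{O}_{\mathrm{SB}(D_i)}$ is the endomorphism algebra of the tautological bundle.  Exactness of
\[
H^1(Z,T'_Z) \xrightarrow{\delta_Z} H^2(Z,A_Z) \to H^2(Z,T_Z)
\]
then lifts $\alpha_Z$ to a class $\gamma_Z \in H^1(Z,T'_Z)$; let $X \to Z$ be the associated $T'_Z$-torsor.

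To check that $X$ is a generic splitting variety for $\alpha$, fix a field extension $L/F$.  An $L$-point of $X$ projects to some $z \in Z(L)$ and trivializes the fiber torsor $X_z$; naturality of the connecting map then gives $\alpha_L = z^*\alpha_Z = \delta(z^*\gamma_Z) = \delta([X_z]) = 0$.  Conversely, if $\alpha_L = 0$, then $\beta_L = 0$ so we may pick $z \in Z(L)$; the class $z^*\gamma_Z \in H^1(L,T'_L)$ lies in $\ker \delta$, which equals the image of $H^1(L,T_L)$.  Because $T$ is quasi-trivial, Hilbert 90 together with Shapiro give $H^1(L,T_L) = \bigoplus_i H^1(L\otimes_F E_i,\Gm) = 0$, so $\delta$ is injective, forcing $z^*\gamma_Z = 0$; hence $X_z$ is a trivial torsor and admits an $L$-point.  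Smoothness of $X$ follows because $Z$ is smooth (Severi-Brauer varieties are smooth, and Weil restrictions along finite separable extensions preserve smoothness) and $X\to Z$ is a torsor under a smooth group scheme.

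The main subtlety is the vanishing of $\beta$ on $Z$, which rests on the tautological-bundle splitting over Severi-Brauer varieties; with that in hand the remainder of the argument is a diagram chase in the long exact sequence combined with Hilbert 90 for quasi-trivial tori.
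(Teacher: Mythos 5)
Your argument is correct, and it reaches the theorem by a genuinely different route than the paper. Both proofs start from the same skeleton: embed $A$ into an induced torus, set $T'=T/A$, and exploit the boundary map $H^1(-,T')\to H^2(-,A)$ together with Hilbert 90 for quasi-trivial tori. The divergence is in how the obstruction in $H^2(-,T)$ is killed and where the final torsor lives. The paper chooses the ambient torus adaptively: it Weil-restricts along a splitting field $E$ of $\alpha$, so that by Shapiro's lemma the image of $\alpha$ in $H^2(F,\mc T)$ is already zero over $F$; then $\alpha$ lifts to $H^1(F,\mc T')$, and the splitting variety is produced as the fiber of a surjection $\mc P\to\mc Q$ coming from a Colliot-Th\'el\`ene--Sansuc coflasque resolution of $\mc T'$, via the general ``presentable functor'' splitting principle (Proposition~\ref{presentable splitting}); the resulting $X$ is a $\mc T'$-torsor over $\Spec F$ sitting inside a quasi-trivial torus. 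You instead keep the standard quasi-trivial embedding, accept that $\beta\in\bigoplus_i\Br(E_i)$ is nonzero, and kill it geometrically by passing to $Z=\prod_i \R_{E_i/F}\mathrm{SB}(D_i)$, using Ch\^atelet's theorem for the equivalence $Z(L)\neq\emptyset\Leftrightarrow\beta_L=0$ and the Amitsur/tautological-bundle fact that $[D_i]$ dies in $\Br(\mathrm{SB}(D_i))$; the splitting variety is then the total space of a $T'_Z$-torsor over $Z$, and the verification is the diagram chase you give. What each buys: your construction is more classical and self-contained relative to Brauer--Severi theory (no coflasque resolution, no presentable-functor formalism, no auxiliary choice of a splitting field of $\alpha$, and the quasi-trivial torus stays of finite type), at the cost of invoking Ch\^atelet and the splitting of $[D]$ over $\mathrm{SB}(D)$ and of producing an $X$ fibered over a product of Severi--Brauer varieties rather than a homogeneous space over $F$; the paper's version showcases its presentability machinery and yields a torsor under a torus over the base field. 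One shared caveat (not a gap relative to the paper, which has the same issue): when $\cha F = p>0$ and $A$ has an infinitesimal part, the exact sequence $1\to A\to T\to T'\to 1$ and the connecting maps should be taken in flat rather than \'etale cohomology.
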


\subsection*{Pfister numbers}

Recall that the fundamental ideal $I(F)$ of the Witt ring $W(F)$ of
equivalence classes of quadratic forms over $F$ is defined as the
ideal generated by even dimensional forms. One may show that the
$m$'th power of this ideal $I^m(F)$, is generated as an Abelian group
by the set of $m$-fold Pfister forms: that is, those forms which may
be written as an $m$-fold tensor product of those of the form
$\left<1, -a\right>$. For such a form $q$ with $[q] \in I^m(F)$, the
$m$-Pfister number of $q$, denoted $\Pf_m(q)$, is defined to be the
minimum number $n$ such that $q$ is equivalent to a sum of $n$
$m$-fold Pfister forms.

\begin{thm*}[\ref{pfister}]
Suppose that $F$ is a field of characteristic not $2$ in which $-1 \in
(F^*)^2$. Then the following are equivalent:
\begin{enumerate}
\item $u(F) < \infty$.
\item For all $m > 0$, there exists $N_m > 0$ such that
for all $n > 0$ and for all $\alpha \in H^n(L, \mu_2)$ with $[L:F] \leq m$,
we have $\eind \alpha < N_m$.
\item There exists $N > 0$ such that for all $n > 0$
and for all $\alpha \in H^n(F, \mu_2)$, we have $\lambda(\alpha) < N$.
\item There exists $N > 0$ such that for all $n > 0$
and for all $q \in I^n(F)$, we have $\Pf_n(q) < N$.
\end{enumerate}
\end{thm*}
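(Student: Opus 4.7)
The plan is to prove the four equivalences by establishing the cycle $(1) \Rightarrow (4) \Rightarrow (3) \Rightarrow (2) \Rightarrow (1)$, drawing on the Milnor--Voevodsky isomorphism $I^n(F)/I^{n+1}(F) \cong H^n(F, \mu_2)$ (which sends an $n$-fold Pfister form to its associated symbol), the Arason--Pfister Hauptsatz, Theorem~\ref{main bound}, Remark~\ref{length to index}, and the structural consequence of $-1 \in (F^*)^2$ that the Witt ring $W(F)$ has exponent $2$.

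Several implications are immediate. The implication $(3) \Rightarrow (2)$ is precisely Remark~\ref{length to index}. The implication $(4) \Rightarrow (3)$ follows by projecting a Pfister decomposition of any lift $q \in I^n(F)$ of $\alpha \in H^n(F, \mu_2)$ through the Milnor isomorphism to a symbol decomposition of no greater length. For $(4) \Rightarrow (1)$: since $W(F)$ has exponent $2$, a sum of $m$ $n$-fold Pfister forms in $W(F)$ has the same Witt class as their orthogonal sum, giving $\dim q_{\mathrm{aniso}} < N \cdot 2^n$ for any $q \in I^n(F)$. Specializing to $n = 1$ bounds $\dim q_{\mathrm{aniso}}$ for $q \in I(F)$, and augmenting an odd-dimensional anisotropic form with $\langle c \rangle$ for $c$ chosen so that the sum lies in $I(F)$ (with discriminant controlled using $-1 \in F^{*2}$) extends the bound to all anisotropic forms, yielding $u(F) < \infty$.

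For $(1) \Rightarrow (4)$: Arason--Pfister gives $I^n(F) = 0$ for $n > \log_2 u(F)$, and the analogous fact over any finite $L/F$ (with $u(L)$ uniformly bounded on extensions of bounded degree via a transfer argument, valid since $F$ is non-real) restricts the problem to finitely many degrees. We induct upward on $n$: the base case gives $\eind, \lambda \leq 2$; the inductive step applies Theorem~\ref{main bound}, with the extension-uniform effective-index bounds at lower degrees, to bound $\lambda(\alpha)$ in terms of $\eind(\alpha)$, combined with direct quadratic-form arguments bounding $\eind(\alpha)$ itself (for $n = 2$, via the index of $2$-torsion Brauer classes in terms of $u(F)$). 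A downward induction on the $I$-adic filtration starting from $I^{n_0}(F) = 0$, lifting symbol decompositions via the Milnor isomorphism and using the $2$-torsion of $W(F)$ to absorb higher-degree Pfister forms, then converts the symbol-length bound into the bound on $\Pf_n(q)$. Finally, $(2) \Rightarrow (1)$ is obtained by the same scheme, propagating the effective-index bound from $F$ to finite extensions via a corestriction argument so that Theorem~\ref{main bound} applies, and then invoking $(4) \Rightarrow (1)$.

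The principal obstacles are: (i) transferring effective-index and $u$-invariant bounds from $F$ to finite extensions, essential for iterating Theorem~\ref{main bound} and crucially reliant on the non-real hypothesis; (ii) bounding $\eind$ in intermediate cohomological degrees using only the $u$-invariant without circularity; and (iii) the downward induction converting symbol-length bounds into Pfister-number bounds, which must handle the interplay between Pfister forms of varying degree through the $2$-torsion of $W(F)$.
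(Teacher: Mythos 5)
Your proposal is correct in substance and rests on exactly the same pillars as the paper's proof -- Proposition~\ref{main2} (bounding $\eind$ in every degree from $u(F)$ via Karpenko's theorem on holes in $I^n$), Theorem~\ref{main bound} (symbol length from effective index), the descending induction on the $I$-adic filtration together with the identity expressing an $n$-fold Pfister form as three $(n-1)$-fold ones when $-1\in(F^*)^2$, and the trivial $n=1$ observation recovering $u(F)<\infty$. The difference is purely in the decomposition: the paper runs the cycle $(1)\Rightarrow(2)\Rightarrow(3)\Rightarrow(4)\Rightarrow(1)$, whereas you run it backwards, $(1)\Rightarrow(4)\Rightarrow(3)\Rightarrow(2)\Rightarrow(1)$. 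Your reverse arrows $(4)\Rightarrow(3)$ (push a Pfister decomposition through $e_n$) and $(3)\Rightarrow(2)$ (Remark~\ref{length to index}) are correct and genuinely easy, but they buy you nothing: your two hard implications, $(1)\Rightarrow(4)$ and $(2)\Rightarrow(1)$, each internally traverse the entire chain $\eind\text{-bound}\Rightarrow\lambda\text{-bound}\Rightarrow\Pf\text{-bound}$, so the paper's forward ordering is strictly more economical and you end up proving the easy arrows redundantly. Also, in $(4)\Rightarrow(1)$ you need no control of the discriminant: any odd-dimensional form becomes an element of $I(F)$ after adding an arbitrary $\langle c\rangle$, since $I(F)$ is just the even-dimensional forms.

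The one place your sketch is thin is obstacle (i), the need for effective-index bounds over finite extensions $L/F$ in order to invoke Theorem~\ref{main bound} (whose hypothesis, via Theorem~\ref{bounded rep thm}, quantifies over all finite $L/F$). You are right to flag it; note that the paper's own proof of $(2)\Rightarrow(3)$ elides the same point. In the direction starting from $u(F)<\infty$ this is genuinely fixable -- Leep-type transfer bounds give $u(L)$ bounded in terms of $[L:F]$ and $u(F)$, and an inspection of the proof of Theorem~\ref{bounded rep thm} shows only extensions of degree bounded in terms of the data are ever used -- but a bare appeal to ``corestriction'' in the $(2)\Rightarrow(1)$ direction does not by itself transport an $\eind$-bound from $F$ to its finite extensions, so that step would need the same care if you were to write it out in full.
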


\subsection*{Cohomological dimension}

It turns out that bounds on the symbol length and related notions in
Galois cohomology has striking consequences for cohomological
dimension. This can be seen using the reduced power operations, as
described in \cite{Kahn:FI,Revoy}. The result below is stated in
\cite[Proposition~1(8)]{Kahn:FI} in the case $\ell = 2$, but is easily
extended to general $\ell$ as well.

\begin{prop} \label{cdim bound}
Suppose that $F$ is a field such that the length of all classes in
$H^{2n}(F, \mu_\ell^{\otimes {2n}})$ is bounded by $m$. Then $H^{2n(m+1)}(F,
\mu_\ell^{\otimes {2n}(m+1)}) = 0$ if $\ell$ is odd or $-1 \in (F^*)^2$.
\end{prop}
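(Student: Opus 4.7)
The plan is to reduce, via the norm residue isomorphism, to showing that every $(m+1)$-fold cup product $t_1\cdots t_{m+1}$ of degree-$2n$ symbols vanishes in $H^{2n(m+1)}(F,\mu_\ell^{\otimes 2n(m+1)})$, and to exploit the fact that under the hypothesis on $\ell$ each such $t_i$ satisfies $t_i^2=0$.

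First I would establish the square-zero lemma for symbols.  For any $(a)\in H^1(F,\mu_\ell)$, graded commutativity gives $(a)(a)=-(a)(a)$, whence $(a)^2=0$ for odd $\ell$; when $\ell=2$, one instead uses $(a)^2=(a,-1)$ together with $-1\in(F^*)^2$ to conclude $(a)^2=0$.  Reshuffling factors via graded commutativity, every symbol $s\in H^{2n}(F,\mu_\ell^{\otimes 2n})$ then satisfies $s^2=0$.  Now if $\alpha\in H^{2n}$ has length at most $m$, write $\alpha=s_1+\cdots+s_m$ and expand $\alpha^{m+1}$ multinomially as a sum of monomials $s_{i_1}\cdots s_{i_{m+1}}$ with $i_j\in\{1,\ldots,m\}$; by pigeonhole some index repeats, so each monomial contains a factor $s_i^2=0$ and vanishes.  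Hence $\alpha^{m+1}=0$ for every $\alpha\in H^{2n}$ under the length hypothesis.

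Finally, I would polarize.  By norm residue, any class in $H^{2n(m+1)}(F,\mu_\ell^{\otimes 2n(m+1)})$ is a sum of $(m+1)$-fold cup products of symbols in $H^{2n}$, so it suffices to kill $t_1\cdots t_{m+1}$ for arbitrary symbols $t_i\in H^{2n}$.  The class $\alpha=t_1+\cdots+t_{m+1}$ still lies in $H^{2n}$ and has length at most $m$ by hypothesis, so the previous step gives $\alpha^{m+1}=0$; expanding multinomially and discarding every term with a repeated $t_i$ leaves $(m+1)!\,t_1\cdots t_{m+1}=0$.  When $(m+1)!$ is a unit modulo $\ell$ (in particular whenever $\ell>m+1$) we are done immediately.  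The main obstacle is the residual case $\ell\mid(m+1)!$, most notably $\ell=2$, where the polarization identity becomes vacuous; here one appeals to the reduced power operations of Voevodsky/Rost (Steenrod squares at $\ell=2$) to connect degree $2n$ with degree $2n(m+1)$ directly.  This last ingredient is the substantive content of Kahn's Proposition~1(8), and the observation that polarization is the only $\ell$-sensitive step is what makes the extension from $\ell=2$ to general $\ell$ routine.
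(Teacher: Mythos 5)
Your elementary steps are correct as far as they go: under the stated hypothesis on $\ell$ every symbol $s\in H^{2n}(F,\mu_\ell^{\otimes 2n})$ satisfies $s^2=0$, even-degree classes commute, so any class of length at most $m$ has vanishing $(m+1)$-st power, and polarizing $(t_1+\cdots+t_{m+1})^{m+1}=0$ yields $(m+1)!\,t_1\cup\cdots\cup t_{m+1}=0$. But this proves the proposition only when $(m+1)!$ is invertible modulo $\ell$. The cases you set aside --- $\ell=2$ with $m\geq 1$, which is the case of principal interest (and the one Kahn actually states), together with any odd prime $\ell\leq m+1$ --- are exactly where the content of the proposition lies, and your closing appeal to ``reduced power operations\dots to connect degree $2n$ with degree $2n(m+1)$ directly'' is not an argument: it names, without supplying, the single ingredient that constitutes the paper's entire proof. (It also misidentifies it: the operation $x\mapsto x^{[m+1]}$ used in the paper is the divided-power operation on even-degree mod-$\ell$ cohomology in the sense of Revoy and Kahn, not the motivic Steenrod operations of Voevodsky/Rost; the hypothesis ``$\ell$ odd or $-1\in(F^*)^2$'' is precisely what makes this operation available.)

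The paper's argument runs: write a symbol $\alpha\in H^{2n(m+1)}$ as $\alpha_1\cup\cdots\cup\alpha_{m+1}$ with each $\alpha_i\in H^{2n}$ a symbol; then $\alpha=(\alpha_1+\cdots+\alpha_{m+1})^{[m+1]}$, because divided powers of a single symbol vanish in degrees $\geq 2$ so the only surviving term in the addition formula is the product of the $\alpha_i$; on the other hand $\sum_i\alpha_i$ has length at most $m$ by hypothesis, and $\gamma_{m+1}$ of a sum of at most $m$ symbols vanishes by the same pigeonhole count you use, so $\alpha=0$; Bloch--Kato then kills all of $H^{2n(m+1)}$. The divided power is exactly the device that replaces your $x^{m+1}$ by ``$x^{m+1}/(m+1)!$'' and so removes the $(m+1)!$ obstruction; establishing its existence and these two properties is where the real work is, and your sketch stops exactly at that point. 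In particular your final remark has it backwards: the polarization step is not an ``$\ell$-insensitive'' formality whose failure at small $\ell$ is routine to repair --- it is the step that genuinely breaks, and without the divided-power mechanism the proof is incomplete whenever $\ell\mid(m+1)!$.
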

\begin{rem}
In particular, it follows from this result combined with
Theorem~\ref{main bound} that a bound for the index of
Brauer classes of period $\ell$ gives a bound on the
$\ell$-cohomological dimension of $F$.
\end{rem}
\begin{proof}
Write a symbol $\alpha \in H^{2n(m+1)}\left(F, \mu_\ell^{\otimes
2n(m+1)}\right)$ as
$\alpha = \alpha_1 \cup \cdots \cup \alpha_{m+1}$ where $\alpha_i \in
H^{2n}(F, \mu_\ell^{\otimes {2n}})$.
We note that we have
\[\alpha = (\alpha_1 + \cdots + \alpha_{m+1})^{[m+1]}\]
where $[m]$ is the reduced power operation. But since $\sum_i
\alpha_i$ can also be written as a sum of at most $m$ symbols, it
follows that $\alpha = 0$.
\end{proof}

\section{Compactness of functors and bounding symbol length}
\label{functors}

\subsection{Functors}

Let $T$ be a commutative ring. By a \textit{$T$-algebra}, we mean a
ring $R$ together with a ring homomorphism $T \to R$. By convention,
all rings and ring homomorphisms are assumed to be unital.

\begin{defn}
A \textit{$T$-functor} is a functor from the category of commutative
$T$-algebras to the category of sets. We denote the category of
$T$-functors by $\fun T$.
\end{defn}

\begin{defn}
An \textit{Abelian $T$-functor} is a functor from the category of
commutative $T$-algebras to the category of Abelian groups. We denote the category of
Abelian $T$-functors by $\abfun T$.
\end{defn}

\begin{rem} We will generally abuse notation and consider the
(faithful) forgetful functor $\abfun T \to \fun T$ as an inclusion.
\end{rem}

\begin{notn}
Let $X$ be a $T$-scheme. We let $\f X$ denote the functor represented
by $X$. Similarly, for a commutative $T$-algebra $R$, we will abbreviate $\f
{\Spec(R)}$ by $\f R$.

\smallskip

\noindent
We will often abuse notation and write $X$ in place of $\f X$.
\end{notn}

\begin{defn}
We say that a morphism $\ms F : \ms X \to \ms Y$ for $\ms X, \ms Y \in
\fun T$ is \textit{surjective} (resp. \textit{injective}) if for every
commutative $T$-algebra $R$, the set map $\ms F(R) : \ms X(R) \to \ms Y(R)$ is
surjective (resp. injective). Similarly, we say that a sequence of
Abelian $T$-functors is exact if it yields an exact sequence when
evaluated at every commutative $T$-algebra $R$.
\end{defn}

\begin{defn}
Suppose that $\phi: T \to R$ is a ring homomorphism, and $\ms H \in \fun
T$. We let $\ms H_R \in \fun R$ be defined by $\ms H_R(S) = \ms H(S)$ where
a $R$-algebra $S$ is considered as a commutative $T$-algebra by composition
of the structure map of $S$ with $\phi$.
\end{defn}

\subsection{Presentations}

In this section we will develop the concept of ``presentable
functors.'' In fact, we will not so much be interested in this
particular property as much as a weaker condition for a functor to
satisfy, ``pointwise coverability,'' which we describe in
Section~\ref{coverings}. The usefulness of presentability is that the set
of presentable functors forms an Abelian category (see \ref{presentable
abelian}), and is therefore closed under useful constructions such as sums,
kernels and cokernels. These provide us with tools therefore to show that
functors are pointwise coverable.

\begin{defn}
Let $\ms H \in \abfun T$. We say that $\ms H$ is \textit{presentable}
if there exist affine $T$-group schemes $A_1$ and $A_2$ and an exact
sequence
\(A_1 \to A_0 \to \ms H \to 0.\)
In this case we say that $A_\bullet$ is a \textit{presentation} of
$\ms H$.
\end{defn}
\begin{rem}
Here we are abusing notation and writing $A_i, A_i'$ both for the
linear algebraic groups and the Abelian $T$-functors which they
represent.
\end{rem}
Note that if $\ms H, \ms H' \in \abfun T$ are both presentable, then
so is $\ms H \times \ms H'$ by taking a term by term product of their
two presentations. We write $\pres T$ to be the additive subcategory
of $\fun T$ consisting of presentable functors.

\begin{lem} \label{presentable abelian}
$\pres T$ is an Abelian subcategory of $\abfun T$.
\end{lem}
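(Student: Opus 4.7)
Plan. Since kernels, cokernels and finite biproducts in $\abfun T$ are computed pointwise in $\ab$, the ambient category $\abfun T$ is itself Abelian; thus it suffices to verify that $\pres T$ contains a zero object, is closed under finite direct sums, and is closed under the kernels and cokernels of its morphisms as computed in $\abfun T$. The zero functor has the trivial presentation $0\to 0\to 0$, and closure under $\oplus$ has already been observed by taking termwise direct sums of presentations.

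Fix a morphism $f:\ms H\to\ms H'$ with chosen presentations $A_1\xrightarrow{\alpha}A_0\xrightarrow{\pi}\ms H\to 0$ and $B_1\xrightarrow{\beta}B_0\xrightarrow{\pi'}\ms H'\to 0$. For the cokernel, since $\pi$ is epi, $\coker f=\coker(f\pi:A_0\to\ms H')$. If one can lift $f\pi$ through $\pi'$ to a group-scheme morphism $\tilde\phi:A_0\to B_0$, then
\[
A_0\oplus B_1\xrightarrow{(a,b_1)\mapsto\tilde\phi(a)+\beta(b_1)}B_0\longrightarrow\coker f\longrightarrow 0
\]
is the required presentation, since $B_0/(\im\tilde\phi+\im\beta)=\ms H'/\im f$. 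When such a direct lift does not exist, I would enlarge the presentation of $\ms H'$ by adjoining $A_0$ as a direct summand of $B_0$, with structure map $(b,a)\mapsto\pi'(b)+f\pi(a)$, so that the tautological inclusion $A_0\hookrightarrow B_0\oplus A_0$ is a genuine group-scheme lift; one then carries out the preceding construction in this enlarged setting after verifying that the new kernel term is still an affine group scheme. For the kernel, $\pi$ induces a surjection $\ker(f\pi)\twoheadrightarrow\ker f$ with kernel $\im\alpha$, so the problem reduces to presenting $K:=\ker(f\pi)$. Realize $K$ as the fiber product $A_0\times_{\ms H'}0$ and splice it with the presentation of $\ms H'$; the scheme-theoretic kernel of a morphism between affine group schemes is again affine, which, combined with a lifting argument as above, propagates the affine structure through the construction and yields a presentation of $\ker f$.

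The main obstacle is the lifting step: a morphism $A_0\to\ms H'$ in $\abfun T$ need not lift to a group-scheme homomorphism $A_0\to B_0$ through the epi $\pi'$, because Yoneda produces a lift only as a morphism of affine schemes and compatibility with the group-scheme structure is generally obstructed. The technical heart of both the kernel and cokernel constructions is to bypass this non-liftability, either by the direct-sum enlargement of presentations or by carefully identifying the relevant fiber products as quotients of affine group schemes.
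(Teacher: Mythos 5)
Your overall architecture is the same as the paper's: lift $f$ to a morphism of presentations, present $\ker f$ by covering it with the fiber product $A_0\times_{B_0}B_1$ (whose own kernel is then handled using $A_1\times B_1$), and present $\coker f$ by $B_1\times A_0\to B_0\to\coker f\to 0$. The one point of divergence is the lifting step, and there your diagnosis is only half right. The \emph{existence} of a lift $\tilde\phi\colon A_0\to B_0$ of $f\pi$ through $\pi'$ is not in question: the paper obtains it by evaluating the presentation of $\ms H'$ at the coordinate ring of the affine scheme $A_0$ --- surjectivity of $B_0(A_0)\to\ms H'(A_0)$ lifts the element corresponding to $f\pi$, and Yoneda converts that element into a morphism of schemes $A_0\to B_0$ making the square commute. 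So your contingency ``when such a direct lift does not exist'' never arises, and the enlargement of $B_0$ to $B_0\oplus A_0$ is not needed for existence.

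What Yoneda does not provide, as you correctly isolate, is that $\tilde\phi$ is a homomorphism of group schemes rather than a bare scheme morphism; additivity of $\tilde\phi$ (and of the induced $\phi_1\colon A_1\to B_1$) is what makes $A_0\times_{B_0}B_1$ a subgroup scheme of $A_0\times B_1$ and makes $(b_1,a)\mapsto\beta(b_1)+\tilde\phi(a)$ a morphism in $\abfun T$. The paper's proof is silent on this point and simply proceeds with the Yoneda lift. Your proposed repair, however, does not close the gap but merely relocates it: the kernel of the enlarged surjection $B_0\oplus A_0\to\ms H'$, $(b,a)\mapsto\pi'(b)+f\pi(a)$, is the subfunctor of pairs $(b,a)$ with $b+\tilde\phi(a)\in\im\beta$, and exhibiting \emph{that} as the image of an affine group scheme under an additive map requires exactly the additive lift you were trying to avoid; you acknowledge as much by deferring the verification that ``the new kernel term is still an affine group scheme.'' So your write-up matches the paper's construction in all the places where the paper gives details, correctly flags the one step the paper leaves implicit, but does not yet supply a working resolution of that step; as it stands the proposal is incomplete at precisely that point.
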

\begin{proof}
We need to show that $\pres T$ is closed under taking kernels and
cokernels in $\abfun T$.  Suppose that $\phi : \ms H \to \ms H'$ is a
morphism between presentable functors, and let $\ms K$ and $\ms C$ be
the kernel and cokernel respectively, computed in the category $\abfun
T$. Suppose that we are given
presentations
\(A_1 \to A_0 \to \ms H\) and \(A'_1 \to A'_0 \to \ms H'\). We may
extend $\phi$ to a commutative diagram of maps 
\[\xymatrix{
A_1 \ar[r] \ar[d]_{\phi_1} & A_0 \ar[r] \ar[d]_{\phi_0} & \ms H \ar[r]
\ar[d]_{\phi} & 0 \\
A'_1 \ar[r] & A'_0 \ar[r] & \ms H' \ar[r] & 0
}\]
To see that we can do this, consider the element of $a \in \ms H'(A_0)$
corresponding to the composition $A_0 \to \ms H \to \ms H'$. Since the
map $A_0'(A_0) \to \ms H'(A_0)$ is surjective, it follows that we may
find an element of $A_0'(A_0)$ mapping to $a$. But this element is
exactly the data of a morphism $\phi_0 : A_0 \to A_0'$ making the
right-hand square commute. Now, given the existence of $\phi_0$, we
consider the element $b \in A_0'(A_1)$ corresponding to the
composition $A_1 \to A_0 \overset{\phi_0}\longrightarrow A_0'$. It follows from the
commutativity of the right-hand square and the exactness of the top row
that $b$ maps to $0$ in $\ms H(A_1)$. By exactness of the bottom row,
it follows that there is an element of $A_1'(A_1)$ mapping to $b$ in
$A_0'(A_1)$. But, this element is exactly the data of a morphism
$\phi_1: A_1 \to A_1'$ making the diagram commute as desired.

We now turn to constructing presentations for $\ms K$ and $\ms C$. 
Consider the commutative diagram of morphisms of functors:
\[\xymatrix@R=.7cm{
A_1 \times A_1' \ar[r] \ar[d] & A_0 \times_{A_0'} A_1' \ar[r] \ar[d] & \ms K \ar[r]
\ar[d] & 0 \\
A_1 \ar[r] \ar[d]_{\phi_1} & A_0 \ar[r] \ar[d]_{\phi_0} & \ms H \ar[r]
\ar[d]_{\phi} & 0 \\
A'_1 \ar[r] \ar[d] & A'_0 \ar[r] \ar[d] & \ms H' \ar[r] \ar[d] & 0\\
A'_1 \times A_0 \ar[r] & A'_0 \ar[r] & \ms C \ar[r] & 0 \\
}\]
It follows from a diagram chase that the top row is a presentation for
$\ms K$ and the bottom row is a presentation for $\ms C$.
\end{proof}

\subsection{Coverings and compactness} \label{coverings}

\begin{defn}
A morphism $\phi : \ms H' \to \ms H$ with $\ms H', \ms H
\in \fun T$ is called a \textit{cover} if for every $T' \in \comm T$,
$\ms H'(T') \to \ms H(T')$ is surjective. It is called a
\textit{pointwise cover} if for every field $L$ with $T$-algebra
structure $T \to L$, the map $\ms H'(L) \to \ms H(L)$ is surjective.
\end{defn}

\begin{defn}
We say that $\ms H$ is \textit{coverable} (respectively
\textit{pointwise coverable}) if there exists a cover (respectively
pointwise cover) $\ms X \to \ms H$ with $\ms X$ represented by a
$T$-scheme of finite type.
\end{defn}

\begin{rem}
We note that this idea of pointwise cover is close to the notion of
the sampling space for a functor introduced by O'Neil \cite{ONeil:SS,ONeil:MG1C}. In our definition of pointwise coverability however, we
consider arbitrary field extensions as opposed to only finite
extensions as is done with sampling spaces.
\end{rem}

\begin{defn}
Suppose we have $T$-functors $\ms H', \ms H'' \subset \ms H$. We say that
$\ms H'$ is \textit{pointwise contained in} $\ms H''$ if for every
field $L$ with $T$-algebra structure $T \to L$ we have $\ms H'(L)
\subset \ms H''(L)$ as subsets of $\ms H(L)$.
\end{defn}

\begin{defn}
Let $\ms H \in \fun T$ and $\ms H' \subset \ms H$ a subfunctor. We
say that $\ms H'$ is \textit{pointwise quasi-compact in $\ms H$} if
for every collection of coverable subfunctors $\ms R_i \subset
\ms H$, $i \in I$ such that $\ms H'$ is pointwise contained in
$\bigcup_{i \in I} \ms R_i$, there exists a finite subset
$I' \subset I$ such that $\ms H'$ is pointwise contained in 
$\bigcup_{i \in I'} \ms R_i$.
\end{defn}

\begin{defn}
Let $\ms H \in \fun T$. We say that $\ms H$ is \textit{localizable} if 
given a domain $R \in \comm T$ with fraction field $L$, and $a, b \in \ms
H(R)$ with $a|_L = b|_L$, there exists $s \in R$ with $a|_{R[s^{-1}]}
= b|_{R[s^{-1}]}$.
\end{defn}

\begin{thm} \label{functor compactness}
Let $T$ be a Noetherian ring, let $\ms X \in \fun T$ be pointwise
coverable, and let $\ms H \in \fun T$ be localizable.  Suppose that
$\psi: \ms X \to \ms H$ is a morphism. Then $\psi(\ms X)$ is a
pointwise quasi-compact subfunctor of $\ms H$.
\end{thm}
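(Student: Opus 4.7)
The strategy is to use pointwise coverability of $\ms X$ to reduce to a finite-type $T$-scheme and then perform Noetherian induction on its closed subschemes, using localizability of $\ms H$ to handle the generic points one at a time.

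First, I would choose a pointwise cover $\ms Y \to \ms X$ with $\ms Y = \f Y$ for $Y$ a finite-type $T$-scheme, so that $\psi(\ms X(L)) = \psi(Y(L))$ for every field $L$ with $T$-algebra structure $T \to L$. It thus suffices to produce a finite $I' \subset I$ such that for every such $L$ and every $y \in Y(L)$, $\psi(y) \in \ms R_i(L)$ for some $i \in I'$. Since $T$ is Noetherian and $Y$ of finite type, $Y$ is a Noetherian scheme, and I would prove by Noetherian induction on reduced closed subschemes $Y' \subset Y$ that such a finite $I_{Y'}$ exists, the case $Y' = \emptyset$ being vacuous. For the inductive step, let $\eta_1, \dots, \eta_k$ be the generic points of $Y'$ with residue fields $L_j := \kappa(\eta_j)$; the $L_j$-point $y_j$ at $\eta_j$ gives $\psi(y_j) \in \ms R_{i_j}(L_j)$ for some $i_j \in I$ by the covering hypothesis.

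The main technical step is to promote each such pointwise membership to one valid on an open neighborhood of $\eta_j$, where localizability becomes essential. Choose a finite-type cover $Z_j \to \ms R_{i_j}$, lift $\psi(y_j)$ to $z_j \in Z_j(L_j)$, and choose an affine open $\Spec R_j \subset Y'$ around $\eta_j$ meeting only that irreducible component, so that $\Quo(R_j) = L_j$ and the inclusion defines $\tilde y_j \in Y(R_j)$. Since $Z_j$ is of finite type over the Noetherian ring $T$, a standard spreading-out argument extends $z_j \colon \Spec L_j \to Z_j$ to some $\tilde z_j \in Z_j(R_j[s^{-1}])$. The images of $\psi(\tilde y_j)$ and $\tilde z_j$ in $\ms H(R_j[s^{-1}])$ agree after restriction to $L_j$, so by localizability of $\ms H$ they agree on a further localization $R_j[(st)^{-1}]$, giving an open $V_j := \Spec R_j[(st)^{-1}] \subset Y'$ containing $\eta_j$ on which $\psi$ visibly factors through $\ms R_{i_j}$; by functoriality, $\psi(y) \in \ms R_{i_j}(L)$ for every field $L$ and every $L$-point $y$ of $V_j$.

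To conclude, $V := \bigcup_j V_j$ is an open of $Y'$ containing every generic point, so its complement $W := Y' \setminus V$ with the reduced induced structure is a proper closed subscheme; Noetherian induction yields a finite $I_W \subset I$ valid for $W$. Any $y \in Y'(L)$ has image either in some $V_j$, handled above, or in $W$, in which case the map $\Spec L \to Y'$ factors through the reduced subscheme $W$ since $\Spec L$ is reduced, so induction applies. Thus $I_{Y'} := \{i_1, \dots, i_k\} \cup I_W$ works, completing the induction. The main obstacle is this passage from a pointwise statement at the generic point to one at every point of an open neighborhood; it is overcome by combining finite-type spreading out, which rests on the Noetherian hypothesis on $T$, with localizability of $\ms H$.
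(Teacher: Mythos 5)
Your proof is correct and follows essentially the same strategy as the paper: reduce via the pointwise cover to a finite-type scheme, use spreading out plus localizability of $\ms H$ to promote membership at a generic point to an open neighborhood, and then handle the closed complement inductively. The only cosmetic difference is that you run a Noetherian induction on reduced closed subschemes, whereas the paper first reduces to integral affine pieces and inducts on Krull dimension.
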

\begin{proof}
Let $\phi_i: \ms U_i \to \ms H$, $i \in I$ be a collection of morphisms from
coverable functors such that $\psi(\ms X)$ is pointwise contained in
$\bigcup_{i \in I} \phi_i(\ms U_i)$. We wish to show that we may find
a finite subset $I' \subset I$ with $\psi(\ms X)$ also pointwise
contained in $\bigcup_{i \in I'} \phi_i(\ms U_i)$. 

Since each $\ms U_i$ is coverable, we may replace each by an affine
$T$-scheme $U_i = \Spec(S_i)$ (a cover), and preserve the hypothesis.

Choose a morphism from a finite type affine scheme $X \to \ms X$ giving a
pointwise cover of $\ms X$. Since $X$ is finite type, it has a finite
number of components $X_i$, and it suffices to check that the image of
each $X_i$ in $\ms H$ is pointwise contained in a union of a finite
number of subfunctors $\phi_i(U_i)$ of $\ms H$. In particular, we may
consider each $X_i$ individually, and hence reduce to the case where
$X$ is irreducible. Further, since our hypothesis only involved the
field points of $X$, we may also assume that $X$ is reduced by
replacing it with its reduced subscheme. In particular, we may reduce
to the case where $\ms X = X = \Spec(R)$ for an integral domain $R$.

We therefore replace $\ms X$ by an integral affine $T$-scheme $X =
\Spec(R)$, and proceed by induction on the Krull dimension of $X$.

In the case that $\dim(X) = 0$, we have $R$ is a field. The hypothesis
on surjectivity on $R$ points gives us for some $i$, a commutative diagram
\[\xymatrix@R=.3cm @C=2cm{
\Spec(R) \ar[r] \ar@{=}[rd] & U_i \ar[rd] \\
& X \ar[r] & \ms H
}\]
which gives a morphism $X \to U_i$ making the diagram commute. It follows
that for every commutative $T$-algebra $T'$ (and in particular for $T'$ a
field), we have $X(T') \subset U_i(T')$. We therefore obtain our result in
this case by choosing $I' = \{i\}$.

In the general case, assume that we have shown that the conclusion
holds for affine integral schemes $X$ when $\dim(X) < n$, and suppose
we are given an affine integral scheme of dimension $n$. Let $\eta \in
X$ be the generic point, with residue field $\kappa(\eta) = \frc(R)$.
By the hypothesis, we may find an index $i \in I$ and a point $q \in
U_i(\frc(R))$ such that we have a commutative diagram:
\[\xymatrix@R=.3cm @C=2cm{
\Spec(\frc(R)) \ar[r]^q \ar@{=}[rd] & U_i \ar[rrd]^{\phi_i} \\
& \eta \, \ar@{^(->}[r] & X \ar[r]_{\psi} & \ms H
}\]
This gives a morphism $\eta \to U_i$ and hence a rational map from $X$
to $U_i$. Choose an open set $V = \Spec(R[s^{-1}]) \subset X$ on which
this map is defined. We then have two elements of $\ms H(R[s^{-1}])$,
given by $V \to X \overset{\psi}\to \ms H$ and $V \to U_i
\overset{\phi_i}\to \ms H$. By the commutativity of the original
diagram, these points coincide when restricted to $\frc(R)$. Since
$\ms H$ is localizable, it follows that we may perform an additional
localization of $R[s^{-1}]$ so that these two elements agree.
Therefore, by adjusting our choice of $s$, we may assume that we have
a map $V \to U_i$, commuting with the morphisms to $\ms H$. Now, let
$Z = X \setminus V$. It follows that $\psi(V)$ is pointwise contained
in $\phi_i(U_i)$.

Since $\dim Z < \dim X$, it follows by induction
that we may find a finite subset $I'' \subset I$ such that $\psi(Z)$
is pointwise contained in $\bigcup_{j \in I''} \phi_j(U_j)$. But
therefore, since one easily sees that we have a pointwise containment
of $\psi(X)$ into $\psi(Z) \cup \psi(V)$, that $\psi(X)$ is pointwise
contained in $\bigcup_{j \in I'' \cup \{i\}} \phi_j(U_j)$, and hence
setting $I' = I'' \cup \{i\}$, we are done.
\end{proof}

\subsection{Relative Galois cohomology functors}

Let $G$ be a finite group, $T'/T$ a $G$-Galois extension of
commutative rings (as in \cite{Sal:GG,DeIn}), and $X \in \fun T'$.
Consider the Weil restriction $\R_{T'/T} X$. By
definition, this is the object in $\fun T$
defined by\footnote{This discussion make sense for more general extensions
$T'/T$, though we will not need this.}
\[\R_{T'/T}(R) = X(T' \otimes_T R).\]
The same definition applies to Abelian functors as well, associating to an
object $A \in \abfun T'$, an object $\R_{T'/T} A \in \abfun T$.

One may check that the Weil restriction of a presentable (Abelian) functor is
presentable (by taking Weil restrictions of its presentation). It 
It also follows from the definition, that the functor $\R_{T'/T} A$
admits an action by the Galois group $G$ induced by the action on $T'
\otimes_T R$.

\begin{defn}
Let $^G \ms H^i_{T'/T, A} \in \abfun T$ denote the functor defined by
$^G \ms H^i_{T'/T}(R) = H^i\left(G, A(T' \otimes_T R)\right)$.
\end{defn}

\begin{lem} \label{presentable cohomology}
Let $A \in \pres T$ be a presentable functor. Then the functor $^G \ms
H^i_{T'/T, A}$ is also presentable.
\end{lem}
\begin{proof}
Consider the presentable functor $C^j = \left(\R_{T'/T} A \right)^{|G|^j}$.
We think of elements of this group as functions from $G^j$ to $\R_{T'/T} A$
-- i.e. for a commutative $T$-algebra $R$, we have a natural bijection
\[ C^j(R) \leftrightarrow Map(G^j, \R_{T'/T}A (R)) = Map(G^j, A(T'
\otimes_T R)),\]
which we consider as an identification. Note that these Abelian groups
carry a natural action of the Galois group $G = Gal(T'/T)$. We may
obtain morphisms of group schemes $\delta: C^j \to C^{j+1}$ by the
standard formula:
\[(\delta \phi) (g_0, \ldots, g_{j+1}) = g_0(\phi(g_1, \ldots, g_j))
+ \sum_{i = 1}^n (-1)^i \phi(g_0, \ldots, \wh{g_i}, \ldots, g_j).\]
We thus obtain, for every choice of $j+1$-tuple $(g_0, \ldots, g_j)$,
a homomorphism of group schemes $C_j \to \R_{T'/T} A$ defined by
taking $\phi$ to $(\delta \phi)(g_0, \ldots, g_{j+1})$. Let $Z_j$ be
the intersection of the kernels of all these morphisms. Denote the
corresponding representable functors by $\ms C^j, \ms Z^j$. We then
have, by definition of cohomology, a short exact sequence of
functors:
\[\ms C^{i-1} \to \ms Z^i \to \, ^G\ms H^i_{T'/T, A} \to 0,\]
showing that $^G \ms H^i_{T'/T, A}$ is presentable as claimed.
\end{proof}

\begin{defn}
For a commutative algebraic group $A$ (or more generally, Abelian
functor) over $T$, 
let $\ms H^i_{A/T} = \ms H^i_A \in \abfun T$ denote the functor defined by
$\ms H^i_A(R) = H^i_\et \left(R, A_R\right)$.
\end{defn}

\begin{defn}
Let $T$ be a commutative ring and $R$ a commutative $T$-algebra. Suppose we are
given a finite group $G$, and a $R$-algebra $R'$ with $G$-action such that
$R'$ is a $G$-Galois extension of $R$. We say that $R'/R$ is
\textit{pointwise $G$-Galois versal} if for every $T$-algebra $L$ which is
a field, and for every $G$-Galois extension $E/L$, there exists a
homomorphism of $T$-algebras $R \to L$ such that $E \cong R' \otimes_R L$
as $G$-algebras.
\end{defn}

\begin{lem} \label{versal exist}
Let $T$ be a commutative ring. Then there exist
pointwise $G$-Galois versal extensions of commutative $T$-algebras $R'/R$
for every finite group $G$.
\end{lem}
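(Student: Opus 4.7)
The plan is to construct $R'/R$ as a universal family of $G$-Galois algebras equipped with a normal basis, parameterized by their structure constants. I would let $S$ be the quotient of the polynomial ring $T[c^k_{g,h}, t \,:\, g,h,k \in G]$ by the relations enforcing commutativity $c^k_{g,h} = c^k_{h,g}$, associativity $\sum_m c^m_{g,h} c^j_{m,k} = \sum_m c^m_{h,k} c^j_{g,m}$, and $G$-equivariance $c^{jk}_{jg,jh} = c^k_{g,h}$. After inverting $t$, define $R'_S := S[t^{-1}][v_g : g \in G] / J$, where $J$ is generated by the relations $v_g v_h - \sum_k c^k_{g,h} v_k$ for all $g, h$ together with the unit relation $\sum_g v_g - t^{-1}$, and let $G$ act on $R'_S$ by $h \cdot v_g = v_{hg}$. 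The relations make $R'_S$ into a commutative associative $S[t^{-1}]$-algebra with multiplicative identity $t \sum_g v_g$, which is $G$-invariant by transitivity of the action on $\{v_g\}$. Finally, I would take $\Delta \in S[t^{-1}]$ to be the determinant of the canonical map $R'_S \otimes R'_S \to \prod_g R'_S$, $a \otimes b \mapsto (a \cdot g(b))_g$ (the Galois discriminant), and set $R := S[t^{-1}, \Delta^{-1}]$ and $R' := R'_S[\Delta^{-1}]$. By construction, $R'/R$ would then be a $G$-Galois extension in the sense of \cite{DeIn}, and $(R')^G = R$ since the only $G$-invariant $R$-linear combination of $\{v_g\}$ is a scalar multiple of $\sum_g v_g = t^{-1} \in R$.

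For versality, given a field $L/T$ and a $G$-Galois extension $E/L$, I would invoke the normal basis theorem for commutative $G$-Galois algebras over a field to produce $e \in E$ such that $\{g(e)\}_{g \in G}$ is an $L$-basis of $E$. Its trace $\oper{Tr}(e) := \sum_g g(e) \in L$ is necessarily nonzero, for otherwise $\{g(e)\}$ would satisfy the $L$-linear relation $\sum g(e) = 0$. Setting $t := \oper{Tr}(e)^{-1}$ and taking the $c^k_{g,h}$ to be the structure constants determined by $g(e) \cdot h(e) = \sum_k c^k_{g,h} \cdot k(e)$ yields values in $L$ satisfying all defining relations, with $\Delta \ne 0$ because $E/L$ is $G$-Galois. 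These data determine a $T$-algebra homomorphism $\phi : R \to L$, and the assignment $v_g \otimes 1 \mapsto g(e)$ gives a $G$-equivariant $L$-algebra isomorphism $R' \otimes_R L \cong E$, as required.

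\textbf{Main obstacle.} The technical heart of the argument will be verifying that the listed relations correctly cut out the moduli of $G$-Galois algebra structures on $L[G]$ with its regular $G$-representation, and that the resulting $R'/R$ is genuinely $G$-Galois with $(R')^G = R$. A tempting but naive alternative---taking $R' := T[x_g : g \in G]$ with $h \cdot x_g = x_{hg}$, $R := (R')^G$, and localizing at $\prod_{g \ne h}(x_g - x_h)^2$---fails for $G$-Galois extensions in which $E$ is a product of fields rather than a single field: a normal basis element $e$ may have $g(e) = h(e)$ in some component of $E$ (already for $E = \mathbb F_3^3$ as a $\mathbb Z/3$-Galois extension of $\mathbb F_3$), so the discriminant vanishes under the specialization. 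The structure-constants construction avoids this pitfall by encoding the algebra structure of $E$ directly.
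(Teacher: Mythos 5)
Your construction, as written, has a genuine gap at exactly the point you flag, and the relations you list do not in fact cut out the intended moduli. The ideal defining $S$ contains commutativity, associativity and equivariance of the $c^k_{g,h}$ but no unit law; and once you pass to the polynomial quotient $R'_S=S[t^{-1}][v_g]/J$ and impose $\sum_g v_g=t^{-1}$, the identity $1=t\sum_g v_g$ forces extra relations among the $v_k$, e.g. $v_h=t\sum_k\bigl(\sum_g c^k_{g,h}\bigr)v_k$, whose coefficients are nonzero in $S[t^{-1}]$. So $R'_S$ is a proper quotient of the free module $\bigoplus_g S[t^{-1}]v_g$, not free of rank $|G|$. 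This breaks two steps: (i) ``the determinant'' $\Delta$ of $R'_S\otimes R'_S\to\prod_g R'_S$ is not defined as an element of $S[t^{-1}]$ without a free (or at least projective, with chosen) basis; (ii) your proof that $(R')^G=R$ argues only about invariant $R$-linear combinations of the $v_g$, which tacitly assumes every element of $R'$ is uniquely such a combination --- precisely the freeness that fails, and invariants of a quotient can strictly contain the image of the base. The repair is standard but must be made: add the unit-law relations $t\sum_g c^k_{g,h}=\delta_{k,h}$ to the ideal defining $S$, define $R'$ as the free $S[t^{-1}]$-module on the $v_g$ equipped with the structure-constant multiplication (now honestly unital, with $G$ acting by algebra automorphisms and with invariants equal to $S[t^{-1}]\cdot\sum_g v_g$), let $\Delta$ be the determinant of the Galois map computed in the bases $\{v_g\otimes v_h\}$ and $\{v_k\}$-components, and invert it. With that fix, your versality argument (including the correct remark that $\oper{Tr}(e)\neq 0$, and the rank count $\dim_L(R'\otimes_R L)\leq|G|$ making the surjection onto $E$ an isomorphism) does go through.

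You should also know that the ``naive alternative'' you dismiss is essentially the paper's proof, and it is not naive: the definition of pointwise $G$-Galois versal requires only $(R')^G=R$ plus the specialization property, not that $R'/R$ be a Galois extension of rings, so no discriminant localization is needed. The paper takes $R'$ to be the group algebra $T[M]$ of the free abelian group on generators $x_g$ permuted by $G$, defines $R:=(R')^G$ outright, and obtains versality exactly by your specialization step via the normal basis theorem, with no moduli of structure constants at all. Your $\mbb F_3^3$ example correctly shows one cannot also invert $\prod_{g\neq h}(x_g-x_h)$ and retain versality for split Galois algebras, but since Galoisness of $R'/R$ is not demanded, that localization is simply omitted. (One genuine subtlety of the paper's version: using the group algebra means the $x_g$ are units, so the specialization $x_g\mapsto y_g$ needs the normal basis elements to be invertible, which is automatic when $E$ is a field; your structure-constant approach, suitably corrected, sidesteps this and in fact delivers more, namely a versal family that is itself $G$-Galois after inverting $\Delta$, at the cost of the bookkeeping above.)
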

\begin{proof}
Let $M$ be the free $\mbb Z$-module with generators $x_g$ in bijection with
the elements of $G$, let $R' = T[M][s^{-1}]$ be the localization of the
group algebra of $M$, where $s = \prod_{g \neq h} (x_g - x_h)$, endowed by
the natural action of $G$, and $R = (R')^G$. This is a $G$-Galois
extension, by \cite[Ch.III,~Prop.~1.2(5)]{DeIn}.

To see that $R'/R$ is in fact pointwise $G$-Galois versal, let $E/L$ be a
$G$-Galois extension of $T$-algebras with $L$ a field. By the normal basis
theorem \cite[Theorem~4.30]{Jac:BAI}, we may find a basis for $E/L$
consisting of distinct elements $y_g$, permuted by the Galois group in the
obvious way.  Consequently, by sending $x_g$ to $y_g$, we obtain a
homomorphism $R' \to E$ preserving the Galois action, and hence a
homomorphism $R \to L$. It follows immediately that $L \otimes_R R' \cong
E$ as desired.
\end{proof}

\begin{rem} \label{Galois Noetherian}
It follows from the construction above that if $T$ is Noetherian, so is
$R$. To see this, we note first that $R'$ is Noetherian by the Hilbert
Basis Theorem, since it is finitely generated over $T$. On the other hand,
since $R'/R$ is Galois, it follows from \cite[Ch.III,~Prop.~1.2(3)]{DeIn}
that $R'$ is finitely generated over $R$. By the Eakin-Nagata theorem (see,
for example \cite[Theorem~3.7]{Mat}), we may then conclude that $R$ is
Noetherian.
\end{rem}

Let $T$ be a commutative ring, $G$ a finite group, and $A \in \abfun T$.
For $S'/S$ a $G$-Galois extension of commutative $T$-algebras, the
Hochschild-Serre spectral sequence (\cite[Theorem~2.20]{Milne:EC}):
\[H^p(G, H^q(S', A)) \Longrightarrow H^{p+q}(S, A)\]
yields an ``edge morphism'' $H^i(G, A) \to H^i(S, A)$, which in the case
that $S'/S$ is an extension of fields, coincides with the standard
inflation map in Galois cohomology. Define $^G \ms H^i_A \in \fun T$ to be
the subfunctor of $\ms H^i_{A/T}$ whose $S$-points consists of those
classes in $H^i(S, A)$ which may be represented as the image of a cocycle
in $H^i(\Gal(S'/S), A)$ for some Galois extension $S'/S$ with group $G$. We
note that although $\ms H^i_{A/T}$ is an Abelian functor, $^G \ms H^i_A$ is
valued in sets and not Abelian groups.

\begin{prop} \label{G coverability}
Let $T$ be a commutative ring, $G$ a finite group, and $A \in \abfun
T$. Then $^G \ms H^i_{A}$ is pointwise coverable.
\end{prop}
\begin{proof}
Via Lemma~\ref{versal exist}, we may choose a pointwise $G$-Galois
versal extension of commutative $T$-algebras $R'/R$. 
Suppose $H$ is an affine scheme which is a pointwise cover
of $^G \ms H^i_{R'/R, A}$ as in Lemma~\ref{presentable cohomology}.
$H$ is a $R$-scheme, but may also be considered as an $T$
scheme by virtue of the $T$-algebra structure of $R$. We obtain a
morphism:
\[\phi: H \to \, ^G \ms H^i_A\]
as follows. If $S$ is a commutative $T$-algebra, a element of $H(S)$ yields a
morphisms $\Spec(S) \to
^G \ms H^i_{R'/R, A}$
which is equivalent to giving a $R$-algebra structure to $S$ (via the
structure morphism $^G \ms H^i_{R'/R, A} \to \Spec(R)$) together with an
class in $H^i(G, A(R' \otimes_R S))$. Since
$R' \otimes_R S/S$ is a $G$-Galois extension of rings, and in
particular an \'etale $G$-cover of the corresponding schemes, we
obtain a natural morphism $H^i(G, A(R' \otimes_R S)) \to H^i(S, A)$ as
the edge maps of the Hochschild-Serre spectral sequence
$H^p(G, H^q(R' \otimes_R S, A)) \Longrightarrow H^{p+q}(S, A)$
giving finally an element of $H^i(S, A)$
determined by our element of $H(S)$.

To see that $H$ is a pointwise cover of $^G \ms H^i_A$, suppose that
$E/L$ is any $G$-Galois extension of fields with $T$-algebra
structure, and that $\alpha \in H^i(L, A)$ is the image of $\beta \in
H^i(E/L, A(E))$. By definition of $R'/R$, we may find a homomorphism
of rings $\psi : R \to L$ such that $E/L$ is the pullback of $R'/R$
along $\psi$ --- i.e.  we have an isomorphism of $G$-algebras $R'
\otimes_{R, \psi} L \cong E$. In particular, $\psi$ gives $L$ the
structure of a $R$-algebra, and by definition, $\beta$ corresponds to
an $L/R$-point of $^G \ms H^i_{R'/R, A}$. But, therefore, by
definition of $H$ this comes from a point of $H(L)$ which in turn maps
to $\alpha$, as desired.
\end{proof}

\subsection{Bounding symbol length in terms of the representation index}

\begin{defn}
Let $\ms K_M^i$ be the Abelian functor $\ms K_M^i(R)= K_M^i(R)$, where
$K_M^\bullet(R) = T_{\mbb Z}^\bullet(R^*)/\left<a \otimes (1 - a) \mid a, (1 -
a) \in R^*\right>$ is the ``naive'' Milnor $K$-theory of $R$ (as in
\cite{Kerz}), and let $\ms K_\ell^i$ be defined by $\ms K_\ell^i(R) =
K^i_M(R)/\ell K^i_M(R)$. We let $^m \ms K_M^i$ and $^m \ms K_\ell^i$ denote
the subfunctors of $\ms K_M^i$ and $\ms K_\ell^i$ respectively consisting
of those classes which may be expressed as a sum of at most $m$ symbols.
\end{defn}

\begin{rem} \label{K coverable}
It follows immediately from the definition that each of the functors
$^m \ms K^i_M$ is coverable by a finite number of copies of $\mbb
G_m$.
\end{rem}

\begin{rem}
We have a morphism of Abelian functors $\mbb G_m = \ms K_\ell^1 \to \ms
H^1_{\mu_\ell}$ induced by the boundary map in the long exact sequence of
\'etale cohomology induced by the short exact sequence
\[1 \to \mu_\ell \to \mbb G_m \to \mbb G_m \to 1\] 
Using the cup product in cohomology, this induces morphisms:
$\ms K_\ell^i \to \ms H^i_{\mu_\ell^{\otimes i}}$ for all $i$.
\end{rem}

\begin{defn} \label{rep index defn}
Let $\alpha \in H^d(F, \mu_\ell^{\otimes d})$, we define the
\textit{representation index} of $\alpha$, denoted $\rind(\alpha)$ to be
the minimum degree of a finite Galois extension $E/F$ such that $\alpha$ is
in the image of the inflation map $H^d(\Gal(E/F), \mu_\ell^{\otimes d}(E))
\to H^d(F, \mu_\ell^{\otimes d})$.
\end{defn}

\begin{thm} \label{rep to length bound}
Let $\alpha \in H^d(F, \mu_\ell^{\otimes d})$. Then we may bound the
length of $\alpha$ only as a function of the representation index of
$\alpha$.
\end{thm}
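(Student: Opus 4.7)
The plan is to apply the compactness formalism (Theorem~\ref{functor compactness}) to the presentable cohomology functors built in Proposition~\ref{G coverability}, using the Bloch--Kato isomorphism to fit symbols and Galois-representable classes into one pointwise covering picture. Concretely, let $r$ denote a bound on the representation index of $\alpha$, so that $\alpha$ lies in $^G\ms H^d_{\mu_\ell^{\otimes d}}(F)$ for some finite group $G$ with $|G|\leq r$. Since there are only finitely many isomorphism classes of such $G$, it suffices to bound the length of classes in the image of $^G\ms H^d_{\mu_\ell^{\otimes d}}\to\ms H^d_{\mu_\ell^{\otimes d}}$ by some $m=m(G)$, and then take the maximum over $|G|\leq r$.

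Fix such a $G$. By Proposition~\ref{G coverability}, the subfunctor $^G\ms H^d_{\mu_\ell^{\otimes d}}$ of $\ms H^d_{\mu_\ell^{\otimes d}}$ is pointwise coverable by a $T$-scheme of finite type. On the other side, by Remark~\ref{K coverable}, each of the functors ${}^m\ms K_\ell^d$ is coverable (by finitely many copies of $\mathbb{G}_m$), and the Galois symbol natural transformation
\[
\phi_m:{}^m\ms K_\ell^d\longrightarrow \ms H^d_{\mu_\ell^{\otimes d}}
\]
exhibits a nested sequence of coverable subfunctors $\phi_m({}^m\ms K_\ell^d)\subseteq\ms H^d_{\mu_\ell^{\otimes d}}$. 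By the Bloch--Kato/norm residue isomorphism theorem, for every field $L$ the Galois symbol map is surjective, so
\[
\ms H^d_{\mu_\ell^{\otimes d}}(L)=\bigcup_{m\geq 0}\phi_m({}^m\ms K_\ell^d)(L),
\]
which is precisely the statement that the image of ${}^G\ms H^d_{\mu_\ell^{\otimes d}}$ is pointwise contained in $\bigcup_m\phi_m({}^m\ms K_\ell^d)$.

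To apply Theorem~\ref{functor compactness} to the morphism $^G\ms H^d_{\mu_\ell^{\otimes d}}\to\ms H^d_{\mu_\ell^{\otimes d}}$, it remains to check that $\ms H^d_{\mu_\ell^{\otimes d}}$ is localizable. This is the main technical step: given two étale cohomology classes over a domain $R$ agreeing at the generic point, I need to produce a single element $s\in R$ after which they agree over $R[s^{-1}]$. I plan to obtain this from the standard continuity/limit behavior of étale cohomology, namely that for the filtered colimit $\frc(R)=\varinjlim R[s^{-1}]$ one has $H^d_\et(\frc(R),\mu_\ell^{\otimes d})=\varinjlim H^d_\et(R[s^{-1}],\mu_\ell^{\otimes d})$, so equality at the generic fiber is realized at some finite stage.

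With localizability in hand, Theorem~\ref{functor compactness} produces a finite set of indices $m$ such that the image of ${}^G\ms H^d_{\mu_\ell^{\otimes d}}$ is pointwise contained in the union of the corresponding $\phi_m({}^m\ms K_\ell^d)$. Since these subfunctors are nested in $m$, a single $m=m(G)$ suffices, which means every class in $^G\ms H^d_{\mu_\ell^{\otimes d}}(L)$ for any field $L$ is a sum of at most $m(G)$ symbols. Maximizing over finite groups $G$ of order at most $r$ gives the claimed bound on $\lambda(\alpha)$ depending only on the representation index of $\alpha$. The hardest step, as noted, is verifying localizability of étale cohomology functors; everything else is assembled from the presentable-functor machinery already established in Section~\ref{functors}.
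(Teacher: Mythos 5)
Your argument is essentially the paper's own proof: pointwise coverability of $^G\ms H^d_{\mu_\ell^{\otimes d}}$ from Proposition~\ref{G coverability}, the Bloch--Kato theorem to place it pointwise inside $\bigcup_m {}^m\ms K^d_\ell$, coverability of the latter from Remark~\ref{K coverable}, Theorem~\ref{functor compactness} to extract a finite bound $M_G$, and finally a maximum over the finitely many groups $G$ permitted by the representation index. Your explicit check of localizability of $\ms H^d_{\mu_\ell^{\otimes d}}$, via compatibility of \'etale cohomology with the filtered colimit $\frc(R)=\varinjlim R[s^{-1}]$, is correct and supplies a hypothesis the paper's proof invokes without comment.
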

\begin{proof}
Fix a finite group $G$, and let $T'/T$ be a pointwise $G$-Galois
versal extension of $\mbb Z$-algebras. By Proposition~\ref{G
coverability}, the functor $^G \ms H^d_{\mu_\ell^{\otimes d}}$ is
a pointwise coverable subfunctor of $\ms H^d_{\mu_\ell^{\otimes d}}$.
It follows from the Bloch-Kato conjecture that this subfunctor is
pointwise contained in the image of the union $\bigcup_{m > 0}  {}^m\ms
K^d_\ell$. By Remark~\ref{K coverable}, these are coverable, and so
by Theorem~\ref{functor compactness} (using the fact that $T$ is
Noetherian as in Remark~\ref{Galois Noetherian}), it follows that
$^G \ms H^d_{\mu_\ell^{\otimes d}}$ is contained in a finite union
$\bigcup_{0 < m < M} {}^m\ms K^d_\ell$. But this precisely says that
for any cohomology class $\alpha \in H^d(F, \mu_\ell^{\otimes d})$
for any field $F$ of characteristic not divisible by $\ell$, we may
write $\alpha$ as a sum of no more than $M$ symbols. In particular,
$M$ depends only on the group $G$.

In particular, given any bound on the representation index of
$\alpha$, we may come up with an exhaustive finite list of groups $G$
such that $\alpha$ arises as the inflation of a class $H^d(\Gal(E/F),
\mu_\ell^d)$ for a Galois extension $E/F$ with group $G$.  For each
such $G$, since the previous paragraph gives the existence of a bound
$M_G$ for the symbol length for such classes, we may write any such
class as a sum of at most $M = \max \{M_G\}$ symbols.
\end{proof}

\section{Bounding representation index in terms of effective
indices}

\begin{thm} \label{bounded rep thm}
Pick a positive integer $n$, and suppose that for every $m > 0$ and $q \leq n-1$
there is an integer $N_{q,m}$ such that for every finite field extension
$L/F$ with $[L:F] \leq m$, and $\alpha \in H^q(L, \mu_\ell)$, $\eind \alpha
\leq N_{q,m}$.  Then every class in $H^n(F, \mu_\ell)$ has representation index
bounded in terms of its effective index.
\end{thm}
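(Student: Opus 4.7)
The plan is to proceed by induction on $n$, using the Hochschild--Serre spectral sequence applied to the Galois closure of a splitting field of $\alpha$ to reduce the problem to classes of strictly lower degree, where the hypothesis of the theorem and the inductive hypothesis together bound the representation index uniformly. The base cases $n = 0, 1$ are immediate: a class in $H^0$ or $H^1$ with small effective index is already visible on a Galois extension of bounded size.

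For the inductive step, given $\alpha \in H^n(F, \mu_\ell)$ with $\eind \alpha \leq N$, I would pick a finite separable extension $E_0/F$ of degree at most $N$ with $\alpha_{E_0} = 0$, and let $E/F$ be its Galois closure, so that $G := \Gal(E/F)$ has order at most $N!$ and $\alpha_E = 0$. The Hochschild--Serre spectral sequence
$$E_2^{p,q} = H^p\bigl(G, H^q(E, \mu_\ell)\bigr) \Longrightarrow H^{p+q}(F, \mu_\ell)$$
places $\alpha$ in the first piece $F^1 H^n(F, \mu_\ell)$ of the associated filtration, so $\alpha$ is reconstructed from finitely many classes $\beta_i \in H^{q_i}(E, \mu_\ell)$ with $q_i \leq n-1$ together with compatible $G$-cocycle data.

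By the inductive hypothesis applied over $E$ (whose finite extensions are also finite over $F$, so the effective-index hypothesis on $H^q$ for $q \leq n-1$ still holds), every $\beta_i$ has representation index bounded by a constant depending only on $n$, $\ell$, and $N_0, \ldots, N_{n-1}$; each $\beta_i$ is therefore the inflation of a class from $H^{q_i}(\Gal(K_i/E), \mu_\ell)$ for a finite Galois extension $K_i/E$ of bounded degree. Letting $K/F$ be the Galois closure over $F$ of the compositum of the $K_i$, one obtains a finite Galois extension of $F$ whose degree is bounded in the required quantities and through which all cocycle data representing $\alpha$ factors, so that $\alpha$ is the inflation of a class in $H^n(\Gal(K/F), \mu_\ell)$.

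The main obstacle will be the last step: the Hochschild--Serre spectral sequence only provides subquotients and differentials between the $E_\infty^{p,q}$, so one must lift the graded pieces of $\alpha$ to honest cocycle representatives and then verify that enlarging $K$ to swallow the representatives of the $\beta_i$ and all their $G$-translates simultaneously kills the differentials obstructing $\alpha$ from lying in the image of inflation from $H^n(\Gal(K/F), \mu_\ell)$. Keeping the bound on $[K:F]$ effective throughout this lifting process, rather than losing control of the size of $K$ at each page of the spectral sequence, is the delicate part of the argument.
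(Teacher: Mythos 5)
Your setup (a Galois closure $K/F$ of a bounded-degree splitting field, the Hochschild--Serre spectral sequence, and the observation that $\alpha$ lies in $\mc F^1 H^n(F,\mu_\ell)$) agrees with the paper's, but the step you defer to at the end --- showing that after enlarging the field the class actually lands in the image of inflation --- is the entire content of the proof, and your one-shot plan for it does not work as stated. The classes $\beta_i$ you propose to extract from the filtration relative to $K$ are not intrinsic to $\alpha$: only the leading graded piece of $\alpha$ is well defined, the higher ones depend on choices of lifts, and, more seriously, once you replace $K$ by a larger extension the spectral sequence, its filtration, and the relevant cochain data all change, so splitting the $\beta_i$ chosen over $K$ does not by itself force $\alpha$ into the top filtered piece $\mc F^n$ (equivalently, into the image of $H^n(\Gal(\til K/F),\mu_\ell)$) for the new extension. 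The paper avoids this by climbing the filtration one step at a time: if $\alpha \in \mc F^i$ for $K/F$ with group $G$ and $0 < i < n$, choose a cochain representative $G^{i+1} \to H^{n-i}(K,\mu_\ell)$ of its class in $E^{i,n-i}_\infty$ (a subquotient of $E_2^{i,n-i} = H^i(G, H^{n-i}(K,\mu_\ell))$); its at most $|G|^{i+1}$ values have effective index at most $N_{n-i}$ by hypothesis, so an extension $\til K/K$ of degree at most $N_{n-i}^{|G|^{i+1}}$ (replaced by its Galois closure over $F$, still of bounded degree) kills all of them. Functoriality of the Hochschild--Serre spectral sequence under shrinking the normal subgroup sends this representative to zero already on the $E_2$-page of the new spectral sequence, hence $\alpha \in \mc F^{i+1}$ for $\til K/F$. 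Iterating at most $n-1$ times, choosing a \emph{new} representative at each stage for the current extension, produces the bounded Galois extension from whose group $\alpha$ is inflated; no differentials need to be controlled and the degree bound accumulates automatically. This iterative re-choosing of representatives is exactly what your ``reconstruct $\alpha$ from finitely many $\beta_i$ at once'' formulation misses.

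A secondary point: your induction on $n$ and the use of representation-index bounds for the $\beta_i$ are unnecessary and slightly misdirected. What the argument needs from the lower-degree classes is only that each dies over an extension of bounded degree, which is literally the hypothesis of the theorem ($\eind \leq N_q$ for $q \leq n-1$); a representation-index bound yields this only indirectly (a class inflated from $\Gal(K_i/E)$ restricts to zero on $K_i$), and no induction on the statement being proved is required.
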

We note here that since we are not focusing on constructing an optimal
bound, we are not concerned with the twist on the roots of unity;
after a extension of degree at most $\Phi(\ell)$ we may assume
$\mu_\ell \subset F$ in any case.

Before proceeding to the proof of this theorem, we first note that it
quickly implies the following:

\begin{thm}\label{main bound}
Let $F$ be a field and fix a positive integer $\ell$ prime to the
characteristic of $F$. Fix $d > 0$. Suppose that for every $m > 0$, there
is some $N_m$ such that for every cohomology class $\beta \in H^{d'}(L,
\mu_\ell^{\otimes d'})$ with $[L:F] \leq m$ and $d' < d$ we have $\eind
\beta \leq N_m$.  Then for $\alpha \in H^d(F, \mu_\ell^{\otimes d})$, we
may bound $\lambda(\alpha)$ in terms of the effective index of $\alpha$.
\end{thm}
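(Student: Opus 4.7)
The plan is to derive Theorem~\ref{main bound} as an essentially formal consequence of the two preceding results: Theorem~\ref{bounded rep thm}, which bounds representation index in terms of effective index under the hypothesis on lower-degree classes, and Theorem~\ref{rep to length bound}, which bounds length in terms of representation index. Composing the two bounds yields a bound on $\lambda(\alpha)$ in terms of $\eind(\alpha)$, $N$, and $d$, uniformly in $F$. The only non-trivial bookkeeping is to reconcile the $\mu_\ell$-coefficients of Theorem~\ref{bounded rep thm} with the twisted coefficients $\mu_\ell^{\otimes d}$ appearing in the present hypothesis.

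To address the twist, I would set $F' = F(\mu_\ell)$, so that $[F':F] \leq \Phi(\ell)$ and $\mu_\ell^{\otimes q} \cong \mu_\ell$ over $F'$. Every finite extension $L'/F'$ is also finite over $F$, so the hypothesis of the theorem transfers verbatim: $\eind\beta \leq N$ for every $\beta \in H^{d'}(L', \mu_\ell^{\otimes d'}) \cong H^{d'}(L', \mu_\ell)$ with $d' < d$. Moreover, $\eind(\alpha|_{F'}) \leq \eind(\alpha)$ since restricting a splitting field of $\alpha$ to $F'$ does not increase its degree. Thus Theorem~\ref{bounded rep thm} applies over $F'$ and bounds the representation index of $\alpha|_{F'}$ in terms of $N$, $d$, and $\eind(\alpha)$.

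To transfer back to $F$, note that if $E'/F'$ is a Galois extension of degree $r$ splitting $\alpha|_{F'}$, then the Galois closure of $E'/F$ has degree at most $r \cdot \Phi(\ell)$ and still splits $\alpha$, so the representation index of $\alpha$ over $F$ is controlled by the representation index of $\alpha|_{F'}$ and the fixed constant $\Phi(\ell)$. Applying Theorem~\ref{rep to length bound} to $\alpha$ directly then bounds $\lambda(\alpha)$ in terms of its representation index over $F$, and hence ultimately in terms of $N$, $d$, and $\eind(\alpha)$.

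The main obstacle in this argument lies entirely in the two cited theorems, and not in the composition itself: Theorem~\ref{bounded rep thm} is what does the real work of translating information about lower cohomological degrees into a Galois-group size bound, while Theorem~\ref{rep to length bound} converts such Galois data into symbol length via the presentable functor compactness machinery of Theorem~\ref{functor compactness}. Once both are in hand, the only care required is the tracking of the fixed factor $\Phi(\ell)$ introduced by the roots-of-unity extension, which does not affect the qualitative conclusion.
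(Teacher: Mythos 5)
Your proposal is correct and follows exactly the paper's route: the paper's proof of Theorem~\ref{main bound} is precisely the two-step composition of Theorem~\ref{bounded rep thm} (effective index bounds representation index) with Theorem~\ref{rep to length bound} (representation index bounds length). The extra care you take with the $\mu_\ell$ versus $\mu_\ell^{\otimes d'}$ twist via the degree-$\leq\Phi(\ell)$ extension $F(\mu_\ell)/F$ is exactly the point the paper disposes of in the remark following the statement of Theorem~\ref{bounded rep thm}, so your write-up is, if anything, slightly more explicit than the original.
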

\begin{proof}
Under these hypotheses, by Theorem~\ref{bounded rep thm}, we may bound
the representation index. But by Theorem~\ref{rep to length bound}, we
may therefore bound the length of $\alpha$.
\end{proof}

To prove Theorem~\ref{bounded rep thm}, we will need to make
detailed use of the Hochschild-Serre spectral sequence, which we now
recall:

Suppose that $K/F$ is a $G$-Galois extension, $\mc G = \Gal(F)$, $\mc
N = \Gal(K)$ (so that $\mc G/\mc N = G$). Then we have a convergent
spectral sequence
\[H^p(G, H^q(\mc N, \mu_\ell)) = E^{p,q}_2 \Longrightarrow E^{p+q} =
H^{p+q}(\mc G, \mu_\ell)\]
This comes with a filtration on $E^n$ which we denote by $\mc
F^\bullet E^n$, with $\mc F^{i+1} E^n \subset \mc F^{i} E^n$, and
$F^i E^n/F^{i+1} E^n \cong E^{i, n-i}_\infty$.

We will have to make use of this spectral sequence for different
choices of subgroups $\mc N \subnorm \mc G$. In this case, we will
write the filtrations as $\mc F_{\mc N \subnorm \mc G}^\bullet$, and
the spectral sequence terms as $E^{p,q}_{\mc N \subnorm G,j}$ and
$E^n_{\mc N \subnorm G}$. Note that if we have normal subgroups $\mc
N' \subset \mc N$ of $\mc G$, we obtain a morphism of spectral
sequences
\[\xymatrix{
E^{p,q}_{\mc N, \subnorm G} \ar@{=}[r] \ar[d] & H^p(\mc G/\mc N,
H^q(\mc N, \mu_\ell)) \ar[d] \ar@{=>}[r] & H^{p+q}(\mc G, \mu_\ell)
 \ar@{=}[d] & \mc F^\bullet_{\mc N \subnorm \mc G}E^{p + q}_{\mc N \subnorm \mc G} \ar[d] \\
E^{p,q}_{\mc N', \subnorm G} \ar@{=}[r] & H^p(\mc G/\mc N',
H^q(\mc N', \mu_\ell)) \ar@{=>}[r] & H^{p+q}(\mc G, \mu_\ell)
 & \mc F^\bullet_{\mc N' \subnorm \mc G} E^{p + q}_{\mc N' \subnorm \mc G} \\
}\]
that is, a map on each page which commutes with the differentials, and
induces a corresponding map on the filtered parts. In this case, the
maps on the $E_2$-page are induced by a combination of inflation and
restriction: the inclusion $\mc N' \subset \mc N$ gives a restriction
map $H^q(\mc N, \mu_\ell) \to H^q(\mc N', \mu_\ell)$ which then gives
\[\xymatrix{
Map((\mc G/\mc N)^{p+1}, H^q(\mc N, \mu_\ell)) \ar[r] \ar@{-->}[rd]& Map((\mc G/\mc
N)^{p+1}, H^q(\mc N', \mu_\ell)) \ar[d] & \\ 
& Map((\mc G/\mc N')^{p+1}, H^q(\mc N', \mu_\ell)),
}\]
where the latter map is obtained by pre-composition. Together the
resulting dashed arrow above give homomorphisms
\[H^p(\mc G/\mc N, H^q(\mc N, \mu_\ell)) \to H^p(\mc G/\mc N', H^q(\mc
N', \mu_\ell)),\] which gives the map on the $E_2$ page.

\begin{proof}[proof of Theorem~\ref{bounded rep thm}]
Let $\alpha \in \ker H^n(F, \mu_\ell) \to H^n(K, \mu_\ell)$. Without
loss of generality, we may assume that $K/F$ is Galois, say with
Galois group $G$. Let $\til K/K$ be some field extension with $\til
K/F$ Galois with group $G' = \mc G/\mc N'$, and consider
the Hochschild-Serre spectral sequence for $\mc N' \subnorm \mc G$. 
By definition, we know that our element $\alpha$ is in the filtered
part $\mc F^{1}_{\mc N' \subnorm \mc G} H^n(F, \mu_\ell)$ (since the restriction map to
$H^n(\til K, \mu_\ell)^G$ coincides with the map $E^n_{\mc N' \subnorm
G} \to E^{0, n}_{\mc N' \subnorm G, 2}$).
Since we have a surjection 
\[E^{n, 0}_{\mc N' \subnorm G, 2} = H^n(G', \mu_\ell) \to \mc F^n_{\mc
N' \subnorm \mc G} H^n(F, \mu_\ell) = \mc
F^n_{\mc N' \subnorm G} E^n_{\mc N' \subnorm G},\] 
it follows that $\alpha$ is in the image of a class $\alpha'
\in H^n(G', \mu_\ell)$ exactly when it lies in the $n$'th filtered
part of $E^n_{\mc N' \subnorm G}$.

To proceed, we will inductively construct our field extension $\til K$
whose degree is bounded in terms of $[K:F]$ and $n$ such that the
class $\alpha$ is in the correct filtered part with respect to the
corresponding Hochschild-Serre spectral sequence.

For the sake of notation, let us begin with the case $\til K = K$ and
let us assume that we have $\alpha \in \mc F^i_{\mc N \subnorm \mc G}
H^n(F, \mu_\ell)$, with $0 < i < n$. We will show that we may find
$\til K$ with $\alpha \in \mc F^{i+1}_{\mc N' \subnorm \mc G} H^n(F,
\mu_\ell)$ where the degree of $\til K/K$ is bounded in terms of $n$
and $[K:F]$. Since the number of such $i$'s we must consider is at
most $n-1$, this will prove the claim. 

Consider the image $\ov \alpha$ of $\alpha$ in the subquotient
$F^i_{\mc N \subnorm \mc G}H^n(F, \mu_\ell)/F^{i+1}_{\mc N \subnorm
\mc G}H^n(F, \mu_\ell) =
E^{i,n-i}_\infty$. This group is the image of a subgroup 
\[Z^{i, n-i} \subset E^{i, n-i}_2 = H^i(G, H^{n-i}(\mc N, \mu_\ell))\]
consisting of those elements such that all the successive
differentials vanish. In particular, we may choose a representative
$\til \alpha \in Z^{i, n-i} \subset H^i(G, H^{n-i}(\mc N, \mu_\ell))$
for $\ov \alpha$, and a representative cochain for $\til \alpha$
represented as a function from $G^{i+1}$ to $H^{n-i}(\mc N, \mu_\ell) =
H^{n-i}(K, \mu_\ell)$. By hypothesis, the effective indices of the images
of this function are bounded by $N_{n-i, [K:F]}$ since $n-i < n$. Since we
need only consider $|G|^{i+1}$ such classes, we may find an extension $\til
K/K$ of degree at most $N_{n-i, [K:F]}^{|G|^{i+1}}$ which splits these
classes. In particular, replacing $\til K$ with its Galois closure over
$F$, also of size bounded inductively in terms of the previous extension
degree $[K:F]$, we have found an extension $\til K/K$ of bounded size with
the image of $\alpha$ in the filtered part $F^{i+1}_{\mc N' \subnorm \mc G}
H^n(F, \mu_\ell)$ as desired.
\end{proof}

\section{Cohomology at $2$, Pfister numbers and index bounds} \label{coh2}

For Galois cohomology with coefficients in $\mu_2$, we may use the
tools of quadratic form theory to obtain, in certain cases, bounds on
the indices of cohomology classes, in particular, in terms of the
$u$-invariant. These results are fairly direct applications of the
Milnor conjectures and other results in the theory of quadratic forms. 

We will have use of the following theorem proved by Karpenko,
originally conjectured by Vishik:
\begin{thm}[\cite{Kar:holes}, conjecture 1.1] \label{hole}
If $\phi$ is an anisotropic quadratic form such that $\phi \in I^n
\subset W(F)$, where $I$ is the fundamental ideal of the Witt ring,
and $dim(\phi) < 2^{n+1}$, then $dim(\phi) = 2^{n+1} - 2^i$ for $1
\leq i \leq n+1$.
\end{thm}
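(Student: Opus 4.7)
This is Karpenko's deep theorem (Vishik's conjecture on holes in $I^n$), so I do not expect a short proof, but let me sketch the approach I would take. The plan is to translate the dimension question into an assertion about the Chow motive and algebraic cobordism of the projective quadric $X_\phi \subset \mbb P^{\dim(\phi) - 1}$ and to derive the forbidden dimensions from operations on $\Omega^*(X_\phi)$.

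First I would reduce to studying a convenient form. By the Arason--Pfister Hauptsatz, an anisotropic form in $I^n$ has dimension at least $2^n$, so it suffices to exclude dimensions strictly between the values $2^{n+1} - 2^i$. I would then pass to the generic splitting tower and to the field of functions of $X_\phi$ itself, so that structural results about the motive of $X_\phi$ (Vishik's theory of upper/lower motives, Rost's motivic decomposition for Pfister neighbors) become available. A key input is Orlov--Vishik--Voevodsky's identification of $H^n(F,\mu_2)$ with $I^n/I^{n+1}$, which lets me attach to $\phi$ the Milnor class $e_n(\phi) \in H^n(F,\mu_2)$; since $\phi \in I^n$ is anisotropic of dimension $<2^{n+1}$, it is, after base change to its own function field, a Pfister neighbor, and the Rost motive of the associated Pfister form appears as a summand of $M(X_\phi)$.

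Next I would attack the numerical restriction. The strategy is to apply symmetric operations in algebraic cobordism (in the sense of Vishik, adapted by Karpenko) to carefully chosen classes in $\Omega^*(X_\phi)$ and to $\Omega^*(X_\phi \times X_\phi)$. These operations, combined with Steenrod squares on $\mathrm{Ch}^*(X_\phi;\mbb F_2)$, produce divisibility/congruence constraints on the dimension of $X_\phi$ modulo powers of $2$, phrased via binomial coefficients $\binom{\dim X_\phi}{k} \pmod 2$. The combinatorial content of these constraints, worked through Lucas' theorem, forces the $2$-adic expansion of $\dim(\phi)$ to have the very rigid form $2^{n+1} - 2^i$; any other dimension in the range $(2^n, 2^{n+1})$ would produce a nonzero idempotent or rational cycle incompatible with $X_\phi$ being anisotropic.

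The hard part, and where essentially all the content lies, is the construction and analysis of the symmetric operations and verifying that the relevant cycles are rational over $F$. Everything before that (reductions to generic forms, invoking Rost/Vishik motivic decompositions, the translation between quadratic form invariants and Galois cohomology) is standard; the genuinely new ingredient of Karpenko's proof is the use of algebraic cobordism operations to force the binomial congruences, and reproducing this from scratch is the obstacle I would not expect to overcome without following his paper closely. For the purposes of the present work I would take Theorem~\ref{hole} as a black box and apply it directly to control $\dim(\phi)$ when $u(F)$ is finite.
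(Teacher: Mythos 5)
The paper does not prove this statement at all: it is imported wholesale as Karpenko's theorem resolving Vishik's conjecture (the citation \cite{Kar:holes}) and used as a black box, which is exactly what you propose to do. Your sketch of Karpenko's actual argument is a fair caricature of the literature---though the 2004 proof rests chiefly on Steenrod operations on mod-$2$ Chow groups of quadrics and the determination of higher Witt indices, with the symmetric operations on algebraic cobordism belonging more to Vishik's and Karpenko's subsequent refinements---so your treatment matches the paper's and nothing further is required.
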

Note that by convention, we consider the $0$ form to be anisotropic.

\begin{prop} \label{main2}
Suppose $F$ is a field of characteristic not equal to $2$ and with $u$
invariant $u(F) \leq 2^n$ (for example, a $C_n$ field), and $\alpha \in
H^m(F, \mu_2)$. Then
\begin{enumerate}
\item \label{cd}
If $m > n$ then $\alpha = 0$. In other words, $\cd_2(F) \leq n$.
\item \label{p_eq_i}
If $m = n$ then $\eind(\alpha) | 2$.
\item \label{pi_bound}
If $m < n$, then $\eind(\alpha) | 2^{2^{n-1} - 2^m + m + 1}$.
\end{enumerate}
\end{prop}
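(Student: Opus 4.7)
The overall plan is to translate each class $\alpha \in H^m(F, \mu_2)$ into a quadratic form via the Milnor (norm residue) isomorphism $e_m \colon I^m(F)/I^{m+1}(F) \xrightarrow{\sim} H^m(F, \mu_2)$, pick an anisotropic representative $\phi \in I^m(F)$ whose dimension is therefore bounded by $u(F) \leq 2^n$, and combine this with the Arason-Pfister Hauptsatz and Karpenko's Theorem~\ref{hole}. Parts (1) and (2) are immediate from the Hauptsatz: for (1), $\dim \phi \leq 2^n < 2^m$ forces $\phi$ hyperbolic, whence $\alpha = 0$ and in particular $\cd_2(F) \leq n$; for (2), either $\dim \phi < 2^n$ (and $\phi$ is again hyperbolic) or $\dim \phi = 2^n$, in which case the equality case of Arason-Pfister shows $\phi$ is similar to an $n$-fold Pfister form $\langle\langle a_1, \ldots, a_n \rangle\rangle$. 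Since scaling changes $\phi$ by an element of $I \cdot I^n = I^{n+1}$, the class $\alpha$ corresponds under $e_n$ to the symbol $(a_1) \cup \cdots \cup (a_n)$, killed by the quadratic extension $F(\sqrt{a_1})$, so $\eind(\alpha) \mid 2$.

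For (3), the plan is to bound the $m$-Pfister number of the anisotropic lift $\phi \in I^m(F)$---the smallest $r$ such that $\phi$ is $W(F)$-equivalent to a sum of $r$ $m$-fold Pfister forms---by the exponent $e(m) := 2^{n-1} - 2^m + m + 1$. Any such expression $\phi = \sum_{i=1}^r \lambda_i \langle\langle b_{i,1}, \ldots, b_{i,m} \rangle\rangle$ yields a splitting field $F\bigl(\sqrt{b_{1,1}}, \ldots, \sqrt{b_{r,1}}\bigr)$ for $\alpha$ of degree at most $2^r$, giving the desired bound $\eind(\alpha) \mid 2^{e(m)}$. To obtain the Pfister-number bound I would induct downward on $m$, with base case $m = n$ from (2) (where the Pfister number is at most $1$). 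The recursion
\[
e(m) = e(m+1) + (2^m - 1)
\]
suggests each inductive step from $m+1$ to $m$ adds at most $2^m - 1$ new Pfister summands; these should be extracted from $\phi$ by invoking Karpenko's Theorem~\ref{hole}, which when $\dim \phi < 2^{m+1}$ restricts $\dim \phi$ to the explicit list $\{0, 2^m, 3 \cdot 2^{m-1}, \ldots, 2^{m+1} - 2\}$ and thereby constrains the dimensions appearing in a generic splitting tower for $\phi$.

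The main obstacle is executing the Pfister-number bound in (3). The natural strategy is to peel off Pfister summands from $\phi$ one at a time, using Karpenko's admissible-dimension list to control the dimension of the remainder. When $\dim \phi \geq 2^{m+1}$, Karpenko's theorem does not directly apply, so one must first reduce the dimension of $\phi$ below $2^{m+1}$ by splitting off Pfister forms whose leading entries contribute to $F^*/F^{*2}$; once in that range, the structure of $\phi$ modulo $I^{m+1}$ is sharply constrained, and one extracts the Pfister summand corresponding to its residue class in $H^m(F, \mu_2) = I^m/I^{m+1}$. The delicate accounting---verifying that the number of Pfister summands produced per descent of $m$ is exactly $2^m - 1$, a number which naturally reflects the $\mathbb{F}_2$-rank combinatorics of the ``slots'' of an $m$-fold Pfister form---is the technical heart of the proof and will rely on precise structural results from the Orlov--Vishik--Voevodsky theory of anisotropic forms in $I^m$.
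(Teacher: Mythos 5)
Parts (\ref{cd}) and (\ref{p_eq_i}) of your argument are correct. Your route through the Arason--Pfister Hauptsatz (including its equality case, which identifies a $2^n$-dimensional anisotropic form in $I^n$ as similar to an $n$-fold Pfister form, hence split by $F(\sqrt{a_1})$) is a legitimate and arguably more classical alternative to the paper's use of Karpenko's Theorem~\ref{hole}; the paper instead passes to a quadratic extension making $\phi$ isotropic and observes that the anisotropic part must then vanish. Either way $\eind(\alpha)\mid 2$.

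Part (\ref{pi_bound}) has a genuine gap. You reduce the index bound to the claim that an anisotropic $\phi\in I^m(F)$ with $\dim\phi\le 2^n$ has $m$-Pfister number at most $2^{n-1}-2^m+m+1$, with each descent from $m+1$ to $m$ costing at most $2^m-1$ new Pfister summands --- and you explicitly leave this step, the ``technical heart,'' unexecuted. There is no known result of this shape to invoke: explicit Pfister number bounds for $I^m$ in terms of dimension are unavailable for $m\ge 4$ (this is a well-known open problem, cf.\ \cite{BRV}), and the finiteness statement in Theorem~\ref{pfister} of this paper is non-effective and itself rests on the compactness machinery, so it cannot supply the exponent you need. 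The matching arithmetic $e(m)=e(m+1)+(2^m-1)$ is suggestive numerology, not an argument. The paper's proof avoids Pfister numbers entirely: starting from the anisotropic $\phi$ of dimension at most $2^n$, one builds a tower of $2^{n-1}-2^m+1$ quadratic extensions, each lowering the anisotropic dimension by at least $2$, until the anisotropic part has dimension at most $2^{m+1}-2$; then Lemma~\ref{hole_jump} (a short induction on Karpenko's Theorem~\ref{hole}, splitting one ``hole'' at a time by quadratic extensions) finishes the splitting in degree dividing $2^m$, giving total degree $2^m\cdot 2^{2^{n-1}-2^m+1}=2^{2^{n-1}-2^m+m+1}$. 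You should replace your Pfister-number strategy with a generic-splitting-tower argument of this kind, since splitting $\phi$ as a quadratic form already kills $\alpha=e_m(\phi)$.
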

\begin{proof}
In each case, by the Milnor conjecture (\cite{OVV} for characteristic $0$,
or Merkurjev's article in \cite{Merk:MC} for the case of general characteristic not $2$), we may
choose a quadratic form $\phi \in I^m$ such that $e_m(\phi) =
\alpha$. By our assumption on the $u(F)$, we may represent $\phi$ by
an anisotropic form of dimension at most $2^n$. By theorem
\ref{hole}, if $dim(\phi) < 2^{m+1}$ it must be of the form $2^{m+1}
- 2^i$ for some $1 \leq i \leq m+1$.

In case \ref{cd}, we know that $\phi$ may be represented by an
anisotropic form of dimension less than $2^n$, which is in turn less
than $2^m$. However, $2^{m+1} - 2^i$ is either $0$ or greater than or
equal to $2^n$. Therefore $\phi = 0$ and $\alpha = e_m(\phi) = 0$ as
well.

Alternately, since $u(F) \leq 2^n$, every $m$-fold Pfister form is
trivial. Since $I^m$ is generated by $m$-fold Pfister forms, it must
be trivial. Therefore, $\phi = 0$ and $\alpha = 0$.

In case \ref{p_eq_i}, the dimension of an anisotropic representation
of $\phi$ has dimension at most $2^n$, and must be of the form
$2^{n+1} - 2^i$. Therefore it is either $0$ or of dimension exactly
$2^{n+1} - 2^n = 2^n$. If we then choose a quadratic extension $L/F$
such that $\phi_L$ is isotropic, then its anisotropic part will be a
strictly smaller dimensional form in $I^n$, which again by
theorem \ref{hole} immediately forces it to be $0$. Therefore
$\eind(\alpha) | 2$.

In case \ref{pi_bound}, note that we may always find a tower of
quadratic extensions such that each successive extension lowers the
dimension of the anisotropic part of $\phi$ by at least $2$. Since the
anisotropic part has dimension at most $2^n$, we may find a field extension
$L/F$ which is a tower of 
$$\frac{2^n - (2^{m+1} - 2)}{2} = 2^{n-1} - 2^m + 1$$ many quadratic
extensions such that $\phi_L$ has anisotropic part at most $2^{m+1} -
2$. By lemma \ref{hole_jump}, $\phi_L$ is split by an extension of
degree dividing $2^m$. Therefore, $\phi$ is split by an extension of
degree
$$2^{m}2^{2^{n-1} - (2^{m} - 1)} = 2^{2^{n-1} - 2^m + m + 1}.$$
\end{proof}

\begin{lem} \label{hole_jump}
Suppose $\psi \in I^m$ has dimension at most $2^{m+1} - 2^i$. Then
there is an extension $K/F$ of degree dividing $2^{m + 1 - i}$ which
splits $\psi$.
\end{lem}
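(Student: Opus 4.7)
The plan is to prove this by induction on $m+1-i$, using Karpenko's hole theorem (Theorem~\ref{hole}) as the main engine. The base case $i = m+1$ is immediate: then $\dim\psi \leq 0$, so $\psi$ is already hyperbolic and we take $K = F$ of degree $1 = 2^0$.

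For the inductive step, replace $\psi$ by its anisotropic representative; this does not change the splitting question. If the anisotropic part is zero we are done, so assume it is non-zero. Since the dimension is at most $2^{m+1} - 2^i < 2^{m+1}$, Theorem~\ref{hole} applied in $I^m(F)$ forces $\dim\psi = 2^{m+1} - 2^j$ for some $j$ with $i \leq j \leq m$. In particular $\dim\psi \geq 2^{m+1} - 2^m = 2^m \geq 2$ in all non-trivial cases.

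The key move is to produce a quadratic extension over which $\psi$ becomes isotropic. To do so, pick any two-dimensional subform $\langle a, b\rangle \subset \psi$ and let $E = F(\sqrt{-ab})$. Then $\langle a,b\rangle_E$ is isotropic, hence so is $\psi_E$, and the anisotropic part of $\psi_E$ has strictly smaller dimension than $\dim\psi = 2^{m+1} - 2^j$. Now apply Theorem~\ref{hole} a second time, this time over $E$: since $\psi_E \in I^m(E)$ and its anisotropic part has dimension strictly less than $2^{m+1} - 2^j$, that dimension must be of the form $2^{m+1} - 2^{j'}$ for some $j' \geq j+1 \geq i+1$. In particular, $\dim(\psi_E)_{\mathrm{an}} \leq 2^{m+1} - 2^{i+1}$. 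By the induction hypothesis applied over $E$, there is an extension $K/E$ of degree dividing $2^{m+1-(i+1)} = 2^{m-i}$ which splits $\psi_E$. The composite $K/F$ then has degree dividing $2 \cdot 2^{m-i} = 2^{m+1-i}$, completing the induction.

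The main obstacle is the double application of Karpenko's theorem: we use it once to pin down $\dim \psi$ on the nose as a value of the form $2^{m+1} - 2^j$, and a second time to ensure that any quadratic extension producing isotropy forces the anisotropic dimension to \emph{jump} past the next forbidden interval, landing at or below $2^{m+1} - 2^{i+1}$ rather than merely at $2^{m+1} - 2^i - 2$. Without the hole theorem this gap argument would only yield linear, not geometric, bounds on the splitting degree, so the whole efficiency of the estimate rests on this step.
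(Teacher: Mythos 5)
Your proof is correct and follows essentially the same route as the paper: induct, pass to a quadratic extension making the form isotropic, and use Karpenko's hole theorem (Theorem~\ref{hole}) to force the anisotropic dimension to drop to at most $2^{m+1}-2^{i+1}$ before applying the inductive hypothesis. The only differences are cosmetic — you make the quadratic extension explicit via a binary subform and apply the hole theorem once more over the base field, which is harmless.
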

\begin{proof}
Choose any quadratic extension $E/F$ such that $\psi_E$ is
isotropic. By theorem \ref{hole}, its anisotropic part must have
dimension at most $2^{m+1} - 2^{i+1}$ and so by induction it is split
by an extension $K/E$ of degree dividing $2^{m + 1 - (i + 1)} = 2^{m -
i}$. But this implies that $[K : F]$ divides $2^{m - i + 1}$ and we
are done.
\end{proof}

It seems extremely unlikely that the bounds obtained above on the
effective index are the optimal ones. Rather, it seems more reasonable
to expect the more optimistic conjecture, which came out of
discussions at the AIM conference ``Deformation theory, patching,
quadratic forms, and the Brauer group'':
\begin{conj}
Suppose $\alpha \in H^n(F, \mu_\ell^{\otimes n})$, and the Diophantine
dimension of $F$ is bounded by $d$. Then 
\[\eind(\alpha) | \ell^{\binom{d-1}{n-1}}.\]
\end{conj}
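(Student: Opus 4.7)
This is stated as a conjecture rather than a theorem, so what follows is a plan for an inductive attack rather than a complete strategy; the general statement appears to lie beyond the techniques developed in the paper, but the inductive setup below settles the boundary cases and isolates where the real difficulty lives.

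The plan is to induct on the cohomological degree $n$, guided by the Pascal recurrence
\[\binom{d-1}{n-1} = \binom{d-2}{n-1} + \binom{d-2}{n-2}.\]
Before the induction proper, I would verify the boundary cases: for $n > d$ the exponent is $0$ and the claim reads $\alpha = 0$, consistent with the expected inequality $\cd_\ell(F) \leq \ddim(F)$ and with Proposition~\ref{main2}(\ref{cd}) in the case $\ell = 2$; for $n = 1$ the exponent is $1$ and the bound $\eind(\alpha) \mid \ell$ is immediate from Kummer theory; for $n = d$ the exponent is again $1$ and the case $\ell = 2$ is Proposition~\ref{main2}(\ref{p_eq_i}). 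For the inductive step, I would apply the inductive hypothesis in degrees less than $n$ to obtain a uniform constant $N$ controlling effective indices in those degrees, and then invoke Theorem~\ref{main bound} to bound the symbol length $\lambda(\alpha)$ in terms of $d$, $n$, and $\eind(\alpha)$. The plan is then to locate a common slot $a \in F^*$ occurring in each symbol of a minimal presentation, adjoin $\sqrt[\ell]{a}$, and pass to the residual class in $H^{n-1}(F(\sqrt[\ell]{a}), \mu_\ell^{\otimes n-1})$, hoping to apply the inductive hypothesis with Diophantine dimension $d-1$ to conclude that the residual class is split by an extension of degree $\ell^{\binom{d-2}{n-2}}$. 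The two summands in the Pascal identity would then correspond, roughly, to the cost of splitting this residual class and to an additional cohomological contribution accounted for by the remaining arithmetic budget of $F$.

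The dominant obstruction is that nothing in the paper lets one align symbols to share a common slot: Theorem~\ref{main bound} controls the number of symbols but does not produce a presentation with a common factor, and arranging such alignments is, even in degree $2$, essentially the content of the period-index problem for Brauer classes over $C_d$-fields, which is itself open in the generality at issue. A second obstacle, equally serious, is that the behavior of the Diophantine dimension under the relevant field extensions (Kummer extensions, function fields of symbol splitting varieties) is not known to drop by the amount demanded by the recursion; controlling this would already represent substantial progress on the structure of $C_d$-like fields. Finally, for odd $\ell$ there is no known analogue of Karpenko's hole theorem (Theorem~\ref{hole}) to play the role it plays in the proof of Proposition~\ref{main2}, cutting off another route to sharper dimensional control. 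In consequence, the plan above would at best establish the conjecture in the boundary cases listed and in any special situation where aligned symbol presentations can be constructed by external means; overcoming any of the three obstructions above would be a theorem in its own right.
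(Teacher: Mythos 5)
Since the statement is a conjecture, the paper does not prove it either: what it supplies, under the heading ``circumstantial evidence,'' is a heuristic consistency check at $\ell = 2$. There one has $u(F) \le 2^d$, so the crudest way of splitting an anisotropic form of dimension $2^d$ by successive quadratic extensions costs degree $2^{2^{d-1}}$; writing $2^{n_i}$ for the expected maximal effective index in $H^i(F,\mu_2)$ and splitting a form invariant by invariant instead suggests $n_1 + \cdots + n_d = 2^{d-1}$, which together with $n_1 = n_d = 1$ (Proposition~\ref{main2}), the conjectural $n_2 = d-1$, and the ``exterior algebra'' analogy points to $n_i = \binom{d-1}{i-1}$. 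Your route is genuinely different: you correctly treat the statement as open, set up an induction organized by the Pascal recurrence, check the degenerate cases $n = 1$, $n = d$, $n > d$, and isolate three concrete obstructions (aligning symbols to a common slot, the behavior of $\ddim$ under the relevant extensions, and the absence of an odd-$\ell$ analogue of Theorem~\ref{hole}). This is a sharper diagnosis of where the difficulty lives than the paper attempts, and it correctly situates Theorem~\ref{main bound} as controlling symbol length but not symbol alignment; on the other hand it yields no cases beyond what Proposition~\ref{main2} already gives at $\ell = 2$ together with the trivial case $n = 1$, whereas the paper's heuristic at least explains where the specific binomial exponent comes from.

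Two refinements. First, your opening claim that the setup ``settles the boundary cases'' is too strong for odd $\ell$: the case $n > d$ is exactly the open assertion that mod-$\ell$ cohomology vanishes above the Diophantine dimension, and $n = d$ is likewise only known at $\ell = 2$ via Proposition~\ref{main2}; your later sentences hedge this correctly, but the summary sentence should as well. Second, your second obstruction is in fact stronger than ``not known to drop'': by Lang's theorem an algebraic extension of a $C_d$ field is again $C_d$, so no decrement of the Diophantine dimension can be expected from adjoining $\sqrt[\ell]{a}$ at all; if the recurrence $\binom{d-1}{n-1} = \binom{d-2}{n-1} + \binom{d-2}{n-2}$ is to acquire arithmetic meaning, the passage to $d-2$ must be extracted from the structure of the residual class (for instance, its being pulled back from a smaller field or lying in a cup product with a fixed degree-one class), not from the field extension itself. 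With these caveats, your overall assessment --- that the conjecture lies beyond the methods of the paper and that the common-slot obstruction is the essential one --- is consistent with the paper's own stance in presenting the statement only with circumstantial evidence.
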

Let us describe briefly some circumstantial evidence for this.  The main
support for this conjecture comes from the case $\ell = 2$.  If we suppose
that $F$ with Diophantine dimension bounded by $d$, then it follows that
$u(F) = 2^d$. An anisotropic form would then be at most dimension $2^d$,
and in this case, it would be Witt equivalent to an orthogonal sum $q \cong
b_1 \perp \cdots \perp b_{2^{n-1}}$ for binary forms $b_i$. At worst, this
could be split by individually splitting each of the binary forms in
successive quadratic extensions, yielding a splitting field of degree
$2^{2^{d-1}}$. 
On the other hand, another way to split the form would be to split
each of its cohomological invariants, starting from $e_1$ all the way
to $e_d$. If we assume that $2^{n_i}$ is the effective index of the
$e_i$ invariant obtained at the $i$'th stage, then we should obtain a
splitting field of degree $2^{\sum_{i = 1}^{d}n_i}$. Assuming that the
$e_i$ invariants are ``independent,'' and ``general,'' we expect then
that $2^{n_i}$ should represent the maximal effective index of a class
in $H^i(F, \mu_2)$ (implicitly assuming that this bound should stay
the same over finite extensions of $F$).
We therefore expect that if $2^{n_i}$ is the maximal effective index
of a class in $H^i(F, \mu_2)$, then $\prod_{i = 1}^d 2^{n_i} =
2^{2^{d-1}}$. In other words
\[n_1 + n_2 + \cdots + n_d = 2^{d - 1}\]
Further, we have $n_1 = 1 = n_d$ from Theorem~\ref{main2}, and
conjecturally $n_2 = d - 1$ (see \cite[page~2]{Lie:PIS}, where it is
attributed to a question of Colliot-Th\'el\`ene). Besides this, the
conjecture that $n_i$ should at worst be $\binom{d-1}{i-1}$ is simply
based on the above numerology and the feeling that in some sense the
cohomology ring ``feels a lot like an exterior algebra.'' 

\smallskip

Recall that the $m$-Pfister number of a quadratic form $q$,
denoted $\Pf_m(q)$ is the smallest number $j$ such that $q$ is Witt
equivalent to a sum of $\phi_1 \perp \phi_2 \perp \cdots \perp \phi_j$
of $m$-fold Pfister forms. This terminology is due to Brosnan,
Reichstein and Vistoli \cite[Section~4]{BRV}.

\begin{thm} \label{pfister}
Suppose that $F$ is a field of characteristic not $2$ in which $-1 \in
(F^*)^2$. Then the following are equivalent:
\begin{enumerate}
\item \label{u case} $u(F) < \infty$.
\item \label{strong ind case} For all $m > 0$, there exists $N_m > 0$ such that
for all $n > 0$ and for all $\alpha \in H^n(L, \mu_2)$ with $[L:F] \leq m$,
we have $\eind \alpha < N_m$.
\item \label{len case} There exists $N > 0$ such that for all $n > 0$
and for all $\alpha \in H^n(F, \mu_2)$, we have $\lambda(\alpha) < N$.
\item \label{pf case} There exists $N > 0$ such that for all $n > 0$
and for all $q \in I^n(F)$, we have $\Pf_n(q) < N$.
\end{enumerate}
\end{thm}
\begin{proof}
\mbox{\ }

\noindent
(\ref{u case})
$ \ \Longrightarrow \  $ 
(\ref{strong ind case}): 
By \cite{Leep:SQF}, $u(F) < \infty$ implies that for
every finite extension $L/F$, $u(L)$ is bounded in terms of the
extension degree. In particular, without loss of generality we may assume
$L = F$, and so we must show that if $u(F) < \infty$ then $\eind(\alpha)$
is universally bounded for $\alpha \in H^n(F, \mu_2)$ for all $n$.  But
this follows directly from Proposition~\ref{main2}.

\noindent
(\ref{strong ind case})
$ \ \Longrightarrow \  $ 
(\ref{len case}): 
Our hypothesis implies that, in particular, $\eind \alpha < N_1$ for
$\alpha \in H^2(F, \mu_2)$. 
It follows from the remark just after Proposition~\ref{cdim bound}, that
we may bound the $2$-cohomological dimension of $F$. But then,
Theorem~\ref{main bound} applies in each of the finite number of
relevant degrees to give a bound on the length of any mod-$2$
cohomology class.

\noindent
(\ref{len case})
$ \ \Longrightarrow \  $ 
(\ref{pf case}): 
It follows from Proposition~\ref{cdim bound} that we may bound the
$2$-cohomological dimension of $F$. Say $H^m(F, \mu_2) = 0$ for some
$m > 0$. Let 
\[\Pf_{n}(F) = \max_{q \in I^n(F)}\{\Pf_n(q)\}.\]
We proceed to show that $\Pf_{n} < \infty$ for all $n$ by descending
induction on $n$. To begin, we know that since $H^m(F, \mu_2) = 0$, it
follows from the Milnor Conjectures (or the mod-$2$ case of the
Bloch-Kato Conjecture), proved in \cite{Voe:RPO,Voe:MC2} (see also
Merkurjev's article in \cite{Merk:MC} for general characteristic not $2$), or more generally the
Bloch-Kato conjecture/Norm residue isomorphism theorem \cite{Voe:modl,Wei:NRI}, that $H^{m'}(F,
\mu_2) = 0$ and therefore, that $I^{m'}(F) = 0$ for all $m' \geq m$
since the canonical map from Milnor K-theory to the graded Witt ring
is surjective (see \cite{Milnor:AKTQF}). Now, suppose
that we have shown $\Pf_{n} < \infty$. Consider $q \in I^{n-1}(F)$.
Since $\lambda(e_{n-1}(q)) < N$, we may write $q$ as $q = q' + r$
where $r$ is a sum of at most $N$ $(n-1)$-fold Pfister forms and where
$q' \in I^n(q)$. By induction, we know that $\Pf_{n}(q') < \Pf_n(F) <
\infty$, and so we may write $r = \sum_{i = 1}^{\Pf_n(F)} r_i$. But if
$n > 1$, each $r_i$ is the sum of three $(n-1)$-fold Pfister forms: if
we write $r_i = <1, -a><1, -b>\phi$, where $\phi$ is an $(n-2)$-fold
Pfister form (or $1$ if $n = 2$), then since we have (using the fact that $<1, 1> = <1,
-1>$, since $-1 \in (F^*)^2$):
\[<1, -a><1, -b> = <1, -a> \perp <-b, ab> \sim <1, -a> \perp <1, -b>
\perp <1, ab>\]
and so $r_i = <1, -a>\phi \perp <1, -b>\phi \perp <1, ab> \phi$.
Therefore, we have $\Pf_{n}(F) < N + 3\Pf_{n-1}(F) < \infty$, as
desired. 

In the case $n = 1$, it is immediate that every form in
$I(F)$ is the sum of a binary form and one in $I^2(F)$. Using the
argument of the preceding paragraph, one has
$\Pf_1(F) \leq 1 + 3\Pf_2$. 

\noindent
(\ref{pf case})
$ \ \Longrightarrow \  $ 
(\ref{u case}): 
This follows immediately upon considering the case $n = 1$, since
every form differs from a form in $I(F)$ by at most a $1$-dimensional
form.
\end{proof}

\section{Generic splitting varieties in degree $2$}

Suppose $\ms H$ is a functor from the category of $F$-algebras to the
category of Abelian groups, and let $\alpha \in \ms H(F)$.

\begin{defn}
We say that a variety (resp. scheme) $X/F$ is a generic splitting
variety (resp. scheme) for $\alpha$ if for every field extension
$L/F$, we have $X(L) \neq \emptyset$ if and only if $\alpha_L = 0$.
\end{defn}

We follow \cite[0.1]{CTSan} and use the term multiplicative type to
mean a finite type scheme over $F$ which is diagonalizable after
extending scalars to a suitable separable extension $L/F$ (a finite
type isotrivial group scheme in the language of \cite[X]{SGA3-8to11}).
In particular, this includes the cases of smooth finite group schemes
as well as tori.

In the case that a functor is (almost) presentable, we may in fact
construct such generic splitting schemes easily, and this will be the
basis of our construction:

\begin{prop} \label{presentable splitting}
Suppose $\ms H' \in \abfun F$ is presentable, and let $\ms H$ be any
Abelian functor together with a map $\phi: \ms H' \to \ms H$ which is
an injection on $L$-points for every $L/F$ a field extension. Let
$\alpha \in \ms H(F)$ be the image of $\alpha' \in \ms H'(F)$ under
$\phi$. Let
\[\xymatrix{
A_1 \ar[r]^g & A_0 \ar[r]^f & \ms H' \ar[r] &  0
}\]
be a presentation and $\til \alpha \in A_0(F)$ be a preimage of
$\alpha'$. Then $X = g^{-1}(\til \alpha)$ is a generic splitting scheme
for $\alpha'$.
\end{prop}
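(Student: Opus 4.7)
The plan is to verify the defining property of a generic splitting scheme directly by unwinding the fiber product description of $X$ and performing a short diagram chase on $L$-points, using two ingredients: exactness of the presentation (which by the conventions in the paper holds on every $F$-algebra, in particular on every field extension) and the injectivity of $\phi$ on field points.

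First I would describe $X(L)$ explicitly. By construction, $X = A_1 \times_{A_0, \til\alpha} \Spec F$ is the fiber of $g$ over the $F$-point $\til\alpha$, so for any field extension $L/F$ we have
\[
X(L) = \{y \in A_1(L) : g(y) = \til\alpha|_L \text{ in } A_0(L)\}.
\]
I also note that $\alpha'_L = f(\til\alpha|_L)$ by hypothesis, and that $\phi: \ms H'(L) \to \ms H(L)$ is an injective group homomorphism, so $\alpha_L = 0$ if and only if $\alpha'_L = 0$.

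Next I would prove the two implications. For the forward direction, suppose $y \in X(L)$. Then $g(y) = \til\alpha|_L$, and applying $f$ and invoking $f \circ g = 0$ (from exactness of the presentation at $A_0$, evaluated at $L$) gives $\alpha'_L = f(\til\alpha|_L) = f(g(y)) = 0$, hence $\alpha_L = \phi(\alpha'_L) = 0$. For the reverse direction, suppose $\alpha_L = 0$. Injectivity of $\phi$ on $L$-points forces $\alpha'_L = 0$, so $\til\alpha|_L$ lies in the kernel of $f: A_0(L) \to \ms H'(L)$. By exactness of the presentation at $A_0$ on $L$-points, this kernel equals the image of $g: A_1(L) \to A_0(L)$, so we can choose $y \in A_1(L)$ with $g(y) = \til\alpha|_L$, i.e.\ $y \in X(L)$.

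There is no real obstacle here; the whole argument is a diagram chase. The only substantive points are (i) that the scheme-theoretic fiber $g^{-1}(\til\alpha)$ really does compute the set-theoretic preimage on $L$-points (which is automatic from the fiber-product definition since $A_0,A_1$ are $F$-schemes), and (ii) that the injectivity hypothesis on $\phi$ is actually used in the reverse implication to pass from vanishing of $\alpha_L$ in $\ms H(L)$ to vanishing of $\alpha'_L$ in $\ms H'(L)$. Everything else is formal from exactness.
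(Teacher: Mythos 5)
Your proof is correct and follows essentially the same route as the paper's: the paper's (much briefer) argument is exactly this diagram chase, characterizing $X(L)$ as the set of $y \in A_1(L)$ with $g(y) = \til\alpha|_L$, using exactness of the presentation evaluated at $L$ to identify $\ker f$ with $\im g$, and using injectivity of $\phi$ on field points to pass between vanishing of $\alpha_L$ and of $\alpha'_L$. No further comment is needed.
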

\begin{proof}
This follows essentially immediately from the definitions of
presentability and the injectivity of $\phi$. Since the map $f$ is
surjective as a map of functors, it follows that the preimage exists,
and further, for a given field
extension $L/F$, we will have $\alpha_L' = f(\til \alpha_L) = 0$
exactly when $\til \alpha_L$ is the image of a class in $A_1(L)$,
which is to say, exactly when $X(L)$ is nonempty. Since $\alpha_L' =
0$ if and only if $\alpha_L = 0$ by injectivity of $\phi$, we are
done.
\end{proof}

\begin{thm} \label{generic splittings degree 2}
Let $A$ be any commutative group scheme over $F$ of multiplicative
type and suppose $\alpha \in H^2(F, A)$.  Then there exists a smooth
generic splitting variety $X/F$ for $\alpha$.
\end{thm}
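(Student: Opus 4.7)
The plan is to combine a quasi-trivial resolution of $A$ with a Severi--Brauer generic splitter for the ``Brauer part'' of $\alpha$, and then construct a torsor over the resulting variety to handle the remainder (in the spirit of Proposition~\ref{presentable splitting}). Since $A$ is of multiplicative type, its character module $\wh A$ is a finitely generated continuous Galois module that admits a surjection from a permutation module $P_0 \twoheadrightarrow \wh A$; dualizing yields an exact sequence $0 \to A \to T_0 \to T_1 \to 0$ with $T_0 = \prod_i \R_{F_i/F}\Gm$ a quasi-trivial torus (for finite separable $F_i/F$) and $T_1 = T_0/A$ a torus. Because $H^1(L, T_0) = \prod_i \Pic(L \otimes_F F_i) = 0$ for every field $L/F$ (Hilbert 90, since $L \otimes_F F_i$ is a finite product of fields), the long exact sequence in \'etale cohomology supplies an injection $H^1(L, T_1) \hookrightarrow H^2(L, A)$ whose image equals $\ker\bigl(H^2(L,A) \to H^2(L,T_0)\bigr)$.

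Let $\bar\alpha$ be the image of $\alpha$ in $H^2(F, T_0) = \prod_i \Br(F_i)$ (Shapiro's lemma), represented by central simple algebras $D_i/F_i$, and set $V = \prod_i \R_{F_i/F}\bigl(SB(D_i)\bigr)$. Standard properties of Severi--Brauer varieties together with Weil restriction give that $V$ is smooth projective geometrically integral over $F$ with $V(L) \neq \emptyset$ iff $\bar\alpha_L = 0$; moreover, the identity $V \to V$ furnishes a tautological morphism $V \otimes_F F_i \to SB(D_i)$ of $F_i$-schemes for each $i$, which shows $\bar\alpha_V = 0$ in $H^2(V, T_0) = \prod_i \Br(V \otimes_F F_i)$. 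Therefore $\alpha_V \in H^2(V, A)$ lies in the image of $H^1(V, T_1)$; I would choose any lift $\gamma \in H^1(V, T_1)$ and let $X \to V$ be the associated $T_1$-torsor. Since $T_1$ is a smooth connected torus and $V$ is smooth projective geometrically integral, $X$ is a smooth geometrically integral variety over $F$.

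It remains to verify the generic splitting property. For a field $L/F$, an $L$-point of $X$ is a pair $(v, x)$ with $v \in V(L)$ and $x$ trivializing the fiber $X_v$, equivalently $[\gamma|_v] = 0$ in $H^1(L, T_1)$; under the injection $H^1(L, T_1) \hookrightarrow H^2(L, A)$, $\gamma|_v$ maps to $\alpha_L$, so this vanishing is equivalent to $\alpha_L = 0$. Conversely, if $\alpha_L = 0$ then $\bar\alpha_L = 0$ forces $V(L) \neq \emptyset$, and every $v \in V(L)$ satisfies $\gamma|_v = 0$ by injectivity, producing an $L$-point of $X$. The most delicate step I expect is ensuring that the lift $\gamma$ exists as an \'etale cohomology class on the non-affine variety $V$ (rather than only at its field points), which reduces to the vanishing of $\bar\alpha_V$ exhibited by the tautological trivialization above.
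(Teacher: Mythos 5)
Your argument is correct, but it takes a genuinely different route from the paper's. The paper also begins by embedding $A$ into a quasitrivial torus, but it rigs that torus to be $\mc T = \R_{E/F}\bigl((\R_{L/F}\Gm^n)_E\bigr)$ for an extension $E/F$ splitting $\alpha$, so that $\alpha$ already dies in $H^2(F,\mc T)$ and lifts to $H^1(F,\mc T')$ over $F$ itself; it then resolves $\mc T' = \mc T/A$ by a quasitrivial torus via Colliot-Th\'el\`ene--Sansuc, $0 \to \mc T' \to \mc P \to \mc Q \to 0$, and takes $X$ to be the fibre of $\mc P \to \mc Q$ over a lift of $\alpha$ in $\mc Q(F)$ --- an affine $\mc T'$-torsor sitting inside the torus $\mc P$, produced by the ``presentable functor'' mechanism of Proposition~\ref{presentable splitting}. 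You instead keep an arbitrary quasitrivial $T_0 \supseteq A$, absorb the generally nonvanishing obstruction $\bar\alpha \in \prod_i \Br(F_i)$ by passing to the product of Weil-restricted Severi--Brauer varieties $V$, and then build a $T_1$-torsor over $V$. This trades the choice of splitting field and the coflasque resolution for a scheme-level lifting problem over the non-affine base $V$, which you correctly identify as the delicate point and reduce to the Amitsur-type vanishing of $[D_i]$ in $\Br\bigl(SB(D_i)\bigr)$ (together with $R^q\pi_*=0$ for the finite \'etale projections, so that $H^2(V,T_0)$ is indeed $\prod_i H^2(V\otimes_F F_i,\Gm)$). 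Your construction has the virtue of directly generalizing the Severi--Brauer example cited in the introduction, and the independence of the chosen lift $\gamma$ is transparent since $H^1(L,T_0)=0$ at field points; the paper's has the virtue of staying entirely within the functor-of-points formalism of Section~\ref{functors} and yielding an affine $X$. Two small points to make explicit if you write this up: the $T_1$-torsor $X \to V$ is representable by a scheme because $T_1$ is affine (effectivity of descent for affine morphisms), and when $A$ is not smooth (e.g.\ $\mu_p$ in characteristic $p$) the sequence $1 \to A \to T_0 \to T_1 \to 1$ and the resulting lifting argument should be read in the fppf topology --- an issue the paper's proof elides as well.
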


\begin{proof}
By the definition of multiplicative type and the Weil restriction, we
may find an separable extension $L/F$ and an embedding of $A$ into
$\R_{L/F} \mbb G_m^n$.

Now, choose a field extension $E/F$ such that $\alpha_E = 0$, and
consider the composition of embeddings 
\[A \to \R_{L/F} \mbb G_m^n \to \mc T = \R_{E/F} \left(\left(\R_{L/F} \mbb
G_m^n\right)_E\right).\]
We then find that, using
Shapiro's lemma as in \cite[VII(29.6)]{BofInv}, that the image of
$\alpha$ in $H^2(F, \mc T) = H^2\left(E, \left(\R_{L/F} \mbb
G_m^n\right)_E\right)$ is trivial. Further we have, again by Shapiro's
lemma, also in the context of \'etale algebras (see
\cite[VII(29.7)]{BofInv}) 
\[H^1(F, \mc T) = H^1\left(E, \left(\R_{L/F} \mbb G_m^n\right)_E\right) =
H^1\left(E, \R_{L \otimes E/E} \mbb G_m^n\right) = H^1(L \otimes
E, \mbb G_m^n) = 0\]
using Hilbert's theorem 90.
In particular, if we let $\mc T' = \mc T/A$ be the quotient torus, then using
the exact sequence in Galois cohomology, we obtain:
\[0 \to H^1(F, \mc T') \to H^2(F, A) \to H^2(F, \mc T)\]
Which implies, in particular, that the image of $H^1(F, \mc T')$ in
$H^2(F, A)$ contains the element $\alpha$.

Using \cite[Proposition~1.3(1.3.1)]{CTSan}, we may find an exact
sequence of tori 
\[0 \to \mc T' \to \mc P \to \mc Q \to 0\] 
where $\mc P = \R_{K/F} \mbb G_m$ for $K/F$ an \'etale algebra is a
quasitrivial torus and $\mc Q$ is coflasque. In particular, we obtain
for every $F$-algebra $T$, an exact sequence:
\[\mc P(T) \overset{f}{\to} \mc Q(T) \to H^1(T, \mc T') \to H^1(T, \mc P)\]
in particular, if we let $\ms H'$ be the quotient cokernel functor
(not sheaf!) of $\mc P \to \mc Q$, and $\ms H \in \abfun F$ be defined
by $\ms H(T) = H^2(T, A)$, then we obtain a morphism $\ms H' \to \ms
H$ via the composition
\[\mc Q(T) \to H^1(T, \mc T') \to H^2(T, A)\]
which, using the fact that $\mc P$ is quasitrivial, induces a
injection $\ms H'(L) \to \ms H(L)$ for every field extension $L/F$.
Since $\alpha$ is in the image of this map on $F$-points, say of a
point $\alpha' \in \ms H'(F)$, it follows that $\phi^{-1}(\til
\alpha)$ is a generic splitting scheme for $\alpha$, where $\til
\alpha$ is a preimage of $\alpha'$ in $\mc Q(F)$. But from the
definition of $\ms H'$, it follows from Proposition~\ref{presentable
splitting} that $X = \phi^{-1}(\til \alpha)$ is a generic splitting scheme
for $\alpha$. But since $X$ is a homogeneous variety under the action
of the torus $\ms P$, it follows that it is a smooth variety as
desired. 
\end{proof}

\bibliographystyle{alpha}
\bibliography{citations}

\newcommand{\etalchar}[1]{$^{#1}$}
\def\cprime{$'$} \def\cprime{$'$} \def\cprime{$'$} \def\cprime{$'$}
  \def\cprime{$'$} \def\cprime{$'$} \def\cprime{$'$}
  \def\cftil#1{\ifmmode\setbox7\hbox{$\accent"5E#1$}\else
  \setbox7\hbox{\accent"5E#1}\penalty 10000\relax\fi\raise 1\ht7
  \hbox{\lower1.15ex\hbox to 1\wd7{\hss\accent"7E\hss}}\penalty 10000
  \hskip-1\wd7\penalty 10000\box7}
\begin{thebibliography}{KMRT98}

\bibitem[ABD{\etalchar{+}}64]{SGA3-8to11}
M.~Artin, J.~E. Bertin, M.~Demazure, P.~Gabriel, A.~Grothendieck, M.~Raynaud,
  and J.-P. Serre.
\newblock {\em Sch\'emas en groupes. {F}asc. 3: {E}xpos\'es 8 \`a 11}, volume
  1963/64 of {\em S\'eminaire de G\'eom\'etrie Alg\'ebrique de l'Institut des
  Hautes \'Etudes Scientifiques}.
\newblock Institut des Hautes \'Etudes Scientifiques, Paris, 1964.

\bibitem[BRV10]{BRV}
Patrick Brosnan, Zinovy Reichstein, and Angelo Vistoli.
\newblock Essential dimension, spinor groups, and quadratic forms.
\newblock {\em Ann. of Math. (2)}, 171(1):533--544, 2010.

\bibitem[CTS87]{CTSan}
Jean-Louis Colliot-Th{\'e}l{\`e}ne and Jean-Jacques Sansuc.
\newblock Principal homogeneous spaces under flasque tori: applications.
\newblock {\em J. Algebra}, 106(1):148--205, 1987.

\bibitem[DI71]{DeIn}
Frank DeMeyer and Edward Ingraham.
\newblock {\em Separable algebras over commutative rings}.
\newblock Lecture Notes in Mathematics, Vol. 181. Springer-Verlag, Berlin,
  1971.

\bibitem[HW09]{HW:NVCL}
Christian Haesemeyer and Chuck Weibel.
\newblock Norm varieties and the chain lemma (after {M}arkus {R}ost).
\newblock In {\em Algebraic topology}, volume~4 of {\em Abel Symp.}, pages
  95--130. Springer, Berlin, 2009.

\bibitem[Jac85]{Jac:BAI}
Nathan Jacobson.
\newblock {\em Basic algebra. {I}}.
\newblock W. H. Freeman and Company, New York, second edition, 1985.

\bibitem[Kah00]{Kahn:FI}
Bruno Kahn.
\newblock Comparison of some field invariants.
\newblock {\em J. Algebra}, 232(2):485--492, 2000.

\bibitem[Kar04]{Kar:holes}
Nikita~A. Karpenko.
\newblock Holes in {$I^n$}.
\newblock {\em Ann. Sci. \'Ecole Norm. Sup. (4)}, 37(6):973--1002, 2004.

\bibitem[Ker10]{Kerz}
Moritz Kerz.
\newblock Milnor {$K$}-theory of local rings with finite residue fields.
\newblock {\em J. Algebraic Geom.}, 19(1):173--191, 2010.

\bibitem[KMRT98]{BofInv}
Max-Albert Knus, Alexander Merkurjev, Markus Rost, and Jean-Pierre Tignol.
\newblock {\em The book of involutions}.
\newblock American Mathematical Society, Providence, RI, 1998.
\newblock With a preface in French by J.\ Tits.

\bibitem[Lee84]{Leep:SQF}
David~B. Leep.
\newblock Systems of quadratic forms.
\newblock {\em J. Reine Angew. Math.}, 350:109--116, 1984.

\bibitem[Lie11]{Lie:PIS}
Max Lieblich.
\newblock Period and index in the {B}rauer group of an arithmetic surface.
\newblock {\em J. Reine Angew. Math.}, 659:1--41, 2011.
\newblock With an appendix by Daniel Krashen.

\bibitem[Mat89]{Mat}
Hideyuki Matsumura.
\newblock {\em Commutative ring theory}, volume~8 of {\em Cambridge Studies in
  Advanced Mathematics}.
\newblock Cambridge University Press, Cambridge, second edition, 1989.
\newblock Translated from the Japanese by M. Reid.

\bibitem[Mil70]{Milnor:AKTQF}
John Milnor.
\newblock Algebraic {$K$}-theory and quadratic forms.
\newblock {\em Invent. Math.}, 9:318--344, 1969/1970.

\bibitem[Mil80]{Milne:EC}
James~S. Milne.
\newblock {\em \'{E}tale cohomology}, volume~33 of {\em Princeton Mathematical
  Series}.
\newblock Princeton University Press, Princeton, N.J., 1980.

\bibitem[Mil10]{Merk:MC}
John Milnor.
\newblock {\em Collected papers of {J}ohn {M}ilnor. {V}. {A}lgebra}.
\newblock American Mathematical Society, Providence, RI, 2010.
\newblock Edited by Hyman Bass and T. Y. Lam.

\bibitem[MS82]{MS:NRH}
A.~S. Merkur{\cprime}ev and A.~A. Suslin.
\newblock {$K$}-cohomology of {S}everi-{B}rauer varieties and the norm residue
  homomorphism.
\newblock {\em Izv. Akad. Nauk SSSR Ser. Mat.}, 46(5):1011--1046, 1135--1136,
  1982.

\bibitem[O'N05]{ONeil:MG1C}
Catherine O'Neil.
\newblock Models of some genus one curves with applications to descent.
\newblock {\em J. Number Theory}, 112(2):369--385, 2005.

\bibitem[O'N12]{ONeil:SS}
Catherine O'Neil.
\newblock Sampling spaces and arithmetic dimension.
\newblock In {\em Number theory, analysis and geometry}, pages 499--518.
  Springer, New York, 2012.

\bibitem[OVV07]{OVV}
D.~Orlov, A.~Vishik, and V.~Voevodsky.
\newblock An exact sequence for {$K^M_\ast/2$} with applications to quadratic
  forms.
\newblock {\em Ann. of Math. (2)}, 165(1):1--13, 2007.

\bibitem[Rev73]{Revoy}
Philippe Revoy.
\newblock Formes altern\'ees et puissances divis\'ees.
\newblock In {\em S\'eminaire {P}. {D}ubreil, 26e ann\'ee (1972/73),
  {A}lg\`ebre, {E}xp. {N}o. 8}, page~10. Secr\'etariat Math\'ematique, Paris,
  1973.

\bibitem[Sal80]{Sal:GG}
David~J. Saltman.
\newblock Generic {G}alois extensions.
\newblock {\em Proc. Nat. Acad. Sci. U.S.A.}, 77(3, part 1):1250--1251, 1980.

\bibitem[Sal11]{Sal:CSL}
David~J Saltman.
\newblock Finite u invariant and bounds on cohomology symbol lengths.
\newblock 2011.

\bibitem[SJ06]{SJ:NV}
Andrei Suslin and Seva Joukhovitski.
\newblock Norm varieties.
\newblock {\em J. Pure Appl. Algebra}, 206(1-2):245--276, 2006.

\bibitem[Voe03a]{Voe:MC2}
Vladimir Voevodsky.
\newblock Motivic cohomology with {${\bf Z}/2$}-coefficients.
\newblock {\em Publ. Math. Inst. Hautes \'Etudes Sci.}, (98):59--104, 2003.

\bibitem[Voe03b]{Voe:RPO}
Vladimir Voevodsky.
\newblock Reduced power operations in motivic cohomology.
\newblock {\em Publ. Math. Inst. Hautes \'Etudes Sci.}, (98):1--57, 2003.

\bibitem[Voe11]{Voe:modl}
Vladimir Voevodsky.
\newblock On motivic cohomology with {$\bold Z/l$}-coefficients.
\newblock {\em Ann. of Math. (2)}, 174(1):401--438, 2011.

\bibitem[Wei09]{Wei:NRI}
C.~Weibel.
\newblock The norm residue isomorphism theorem.
\newblock {\em J. Topol.}, 2(2):346--372, 2009.

\end{thebibliography}

\end{document}